\newtheorem{lemma}{Lemma}[section]
\newtheorem{proposition}{Proposition}[section]
\newtheorem{theorem}{Theorem}[section]
\newtheorem{corollary}{Corollary}[theorem]
\theoremstyle{definition}
\newtheorem{definition}{Definition}[section]
\theoremstyle{remark}
\newtheorem{remark}{Remark}[section]
\begin{document}

\title{ The generators of $3$-class group of some fields of degree $6$ over $\mathbb{Q}$ 
}

\author{Siham AOUISSI, Moulay Chrif ISMAILI, Mohamed TALBI and Abdelmalek AZIZI}

\maketitle
 
\medskip  \noindent \textbf{Abstract:}
$\\ $ Let $\mathrm{k}=\mathbb{Q}\left(\sqrt[3]{p},\zeta_3\right)$, where $p$ is a  prime number such that $p \equiv 1 \pmod  9$, and let
$C_{\mathrm{k},3}$ be the $3$-component of the class group of $\mathrm{k}$.  In \cite{GERTH3}, Frank Gerth III proves a conjecture made by Calegari and Emerton \cite{Cal-Emer}  which gives  necessary and sufficient conditions for
 $C_{\mathrm{k},3}$ to be of $\operatorname{rank}\,$ two. The purpose of the present work is to determine generators of $C_{\mathrm{k},3}$, whenever it is isomorphic to $\mathbb{Z}/9\mathbb{Z} \times \mathbb{Z}/3\mathbb{Z}$.
\bigskip

 \noindent{\bf Keywords:} {Pure cubic  fields, $3$-class groups, generators.\\
 AMSC: 11R11, 11R16, 11R20, 11R27, 11R29, 11R37.}\bigskip

\section{Introduction}\label{sec1}
Let $\Gamma=\mathbb{Q}(\sqrt[3]{p})$ be a pure cubic field, where $p$ is a prime number such that $p \equiv 1  \pmod  9$. We denote by $\zeta_3=-1/2+i\sqrt{3}/2$ the normalized primitive third roots of unity,  $\mathrm{k}=\mathbb{Q}(\sqrt[3]{p},\zeta_3)$ the normal closure of $\Gamma$ and  $C_{\mathrm{k},3}$ the $3$-component of the class group of $\mathrm{k}$.\\
Assuming $9$ divides exactly the $3$-class number of $\Gamma$. Then $C_{\mathrm{k},3} \simeq \mathbb{Z}/9\mathbb{Z} \times \mathbb{Z}/3\mathbb{Z}$ if and only if $u=1$, where $u$ is an index of units that will be defined in the notations below. In this paper, we will determine the generators of $C_{\mathrm{k},3}$ when $C_{\mathrm{k},3}$ is of type $(9,3)$ and $3$ is not a cubic residue  modulo $p$. We spot that Calegari and Emerton (\cite[Lemma 5.11]{Cal-Emer}) proved that the $\operatorname{rank}\,$ of the $3$-class group of $\mathbb{Q}(\sqrt[3]{p},\zeta_3)$, with $p \equiv 1  \pmod  9$, is equal to two if $9$ divides the $3$-class number of $\mathbb{Q}(\sqrt[3]{p})$. Moreover, in his work \cite[Theorem 1, p.471]{GERTH3}, Frank Gerth III proves that the converse to Calegari-Emerton's  result   is also true. The present work can be viewed as a continuation of the works \cite{Cal-Emer} and \cite{GERTH3}  .\\  
After reviewing some basic properties of the norm residue symbols and prime factorization in the normal closure of a pure cubic field that will be needed later, we will establish in section \ref{Sect3} some preliminary results of the $3$-class group $C_{\mathrm{k},3}$. Using this, we arrive to determine the generators of $3$-class groups $C_{\mathrm{k},3}$ of type $(9,3)$. All the study cases are illustrated by numerical examples and summarized in tables in section \ref{sect4}.  The usual notations on which the work is based is as follows:
\begin{itemize}
 \item $\Gamma= \mathbb{Q}(\sqrt[3]{d})$ : a pure cubic field, where $d$ is a cube-free natural number;
 \item $\mathrm{k}_0= \mathbb{Q}(\zeta_3)$ : the third cyclotomic field ; 
  \item $\mathrm{k}=\mathbb{Q}(\sqrt[3]{d},\zeta_3)$ : the normal closure of the pure cubic field $\Gamma$;
  \item $u=[E_{\mathrm{k}}: E_0]$ : the index of the sub-group $E_0$ generated by the units of intermediate fields of the extension  $\mathrm{k}/\mathbb{Q}$ in $E_{k}$ the group of units of $\mathrm{k}$;
 \item $\lambda= 1-\zeta_3$ prime integer of $\mathrm{k}_0$;
  \item $\langle\tau \rangle=\operatorname{Gal}\left(\mathrm{k}/\Gamma\right)$, $\tau^2=id, \tau(\zeta_3)=\zeta_3^2$ and $\tau(\sqrt[3]{d})=\sqrt[3]{d}$;
   \item $ \langle \sigma \rangle=\operatorname{Gal}\left(\mathrm{k}/\mathbb{Q}(\zeta_3)\right)$, $\sigma^3=id, \sigma(\zeta_3)=\zeta_3$ and $\sigma(\sqrt[3]{d})=\zeta_3\sqrt[3]{d}$;
  \item For an algebraic number field $L$:
  \begin{itemize}
  \item $\mathcal{O}_{L}$ : the ring of integers of $L$;
  \item $E_{L}$ : the group of units of $L$;
   \item $\mathcal{D}_{L}$ : the discriminant of $L$;
   \item $h_{L}$ : the class number of $L$;
   \item $h_{L,3}$ : the $3$-class number of $L$;
 \item $C_{L,3}$ : the $3$-class group of $L$;
 \item $L_3^{(1)}$ : the Hilbert $3$-class field of $L$;
 \item $[\mathcal{I}]$ : the class of a fractional ideal $\mathcal{I}$  in the class group of $L$;
  \end{itemize}
 \item $\left(\dfrac{c}{p}\right)_3 = 1 \Leftrightarrow X^3\equiv c (mod~p)$ resolved on $\mathbb{Z}
\Leftrightarrow c^{(p-1)/3} \equiv 1 \pmod p$, where $c \in \mathbb{Z}$ and $p$ is a prime number congruent to $1 (mod~3)$.
 \end{itemize}

\section{Norm residue symbol and ideal factorization theory}\label{sec2}
\subsection{The norm residue symbol}
$\\ $ Let $L/K$ an abelian extension of number fields with conductor $f$. For each finite or infinite prime ideal $\mathcal{P}$ of $\mathrm{K}$, we note by $f_{\mathcal{P}}$ the largest power of $\mathcal{P}$ that divides $f$. Let $a \in K^{*}$, we determine an auxiliary number $a_0$ by the two conditions $a_0 \equiv a ~ (mod~ f_{\mathcal{P}})$ and $a_0 \equiv 1 ~ (mod~ \frac{f}{f_{\mathcal{P}}})$. Let $\mathcal{Q}$ an ideal co-prime with $\mathcal{P}$ such that $(a_0)=\mathcal{P}^{e}\mathcal{Q}$ ($b=0$ if $\mathcal{P}$ is infinite). We note by
$$\left(\frac{a,
L}{\mathcal{P}}\right)=\left(\frac{L/K}{\mathcal{Q}}\right)$$
the Artin map in $L/K$ applied to  $\mathcal{Q}$.

\begin{definition} 
Let $K$ be a number field containing the $l^{th}$-roots of units, where $l\in \mathbb{N}$, then for each $a,b \in K^{*}$ and prime ideal $\mathcal{P}$ of $K$, we define the \textbf{\textit{norm residue symbol}} by:
 
$$\left(\dfrac{a,
b}{\mathcal{P}}\right)_{l}=\dfrac{\left(\dfrac{a, K(\sqrt[l]{b})}{\mathcal{P}}\right) \sqrt[l]{b} }{\sqrt[l]{b}}.$$
Therefore, if the prime ideal $\mathcal{P}$ is unramified
in the field $ K(\sqrt[l]{b})$, then we write
 $$\left(\dfrac{
b}{\mathcal{P}}\right)_{l}=\dfrac{\left(\dfrac{ K(\sqrt[l]{b})}{\mathcal{P}}\right) \sqrt[l]{b} }{\sqrt[l]{b}}.$$

\end{definition}

\begin{remark} Notice that $ \left(\dfrac{a,b}{\mathcal{P}}\right)_{l} $ and $\left(\dfrac{b}{\mathcal{P}}\right)_{l}$
are two $l^{th}$-roots of units.
\end{remark}
Following \cite{Hass2}, the principal properties of the norm residue symbol are given as follows:
\subsection*{Properties}
\begin{enumerate}
\item The product formula:
\begin{itemize}

\item $ \left(\dfrac{a_1 a_2,b}{\mathcal{P}}\right)_{l} = \left(\dfrac{a_1,b}{\mathcal{P}}\right)_{l} \left(\dfrac{a_2,b}{\mathcal{P}}\right)_{l}$;

\item $ \left(\dfrac{a,b_1b_2}{\mathcal{P}}\right)_{l} = \left(\dfrac{a,b_1}{\mathcal{P}}\right)_{l} \left(\dfrac{a,b_2}{\mathcal{P}}\right)_{l}$;
\end{itemize}

\item The inverse formula: $ \left(\dfrac{a,b}{\mathcal{P}}\right)_{l}= \left(\dfrac{b,a}{\mathcal{P}}\right)_{l}^{-1};$

\item $\left(\dfrac{a,b}{\mathcal{P}}\right)_{l}=1 \Leftrightarrow $ $a$ is norm residue of $K(\sqrt[l]{b})$ modulo $f_{b}$;

\item $ \left(\dfrac{\sigma a,\sigma b}{\sigma\mathcal{P}}\right)_{l}=\sigma \left(\dfrac{a,b}{\mathcal{P}}\right)_{l},$ for each automorphism $\sigma$ of $K$; 

\item If $\mathcal{P} $ is not divisible by the conductor $f_{b}$ of $K(\sqrt[l]{b})$ and appears in $(a)$ with the exponent $e$, then:

\begin{itemize}
\item $ \left(\dfrac{a,b}{\mathcal{P}}\right)_{l}= \left(\dfrac{b}{\mathcal{P}}\right)_{l}^{-e};$

\item $\mathcal{P}$ is infinite $(e=0)$  $\Rightarrow \left(\dfrac{a,b}{\mathcal{P}}\right)_{l}=1;$
\end{itemize}

\item The classical reciprocity law: let $a,b\in K^{*}$, and the conductors $f_{a}$ and $f_{b}$ of respectively $K(\sqrt[l]{a})$ and $K(\sqrt[l]{b})$ are co-prime, then:
$$ \left(\dfrac{a}{(b)}\right)_{l} =\left(\dfrac{b}{(a)}\right)_{l};$$ 

\item $ \prod_{\mathcal{P}} \left(\dfrac{a,b}{\mathcal{P}}\right)_{l}=1$, where the product is taken on the finite and infinite prime ideals;

\item Let $L$ is a finite extension of $K$, $a \in L$ and $b \in K^{*}$, then:
$$ \prod_{\mathfrak{P}|\mathcal{P}} \left(\dfrac{a,b}{\mathfrak{P}}\right)_{l}=  \left(\dfrac{\mathcal{N}_{L/K}(a),b}{\mathcal{P}}\right)_{l}. $$

\end{enumerate} 

\begin{remark}\label{RqMes}
From property $(3)$, we have:
$$ a \text{ is a norm  in} \  K(\sqrt[l]{b}) \Rightarrow \left(\dfrac{a,b}{\mathcal{P}}\right)_{l}=1, $$ 
for each prime ideal $\mathcal{P}$ of $K$.
\end{remark}

For more basic properties of the norm residue symbol in the number fields, we refer the reader to the papers \cite{DED},  \cite{Hass1} and \cite{Hass2}. Notice that in section \ref{Sect3}, we will use the norm cubic residue symbols $(l=3)$. As the ring of integer $ \mathcal{O}_{\mathrm{k}_0}$ is principal, $h_{\mathrm{k}_0} = 1,$ 
we will write the norm cubic residue symbol as follows:
\begin{center}
$\left(\dfrac{a, b}{(\pi)}\right)_3 = \left(\dfrac{a, b}{\pi}\right)_3 $ and
$\left(\dfrac{a}{(\pi)}\right)_3 = \left(\dfrac{a}{\pi}\right)_3$
\end{center}
 where           
$a, b \in \mathrm{k}_0^*$ and $\pi$ is a prime integer of $\mathcal{O}_{\mathrm{k}_0}.$


\subsection{Prime factorization in a pure cubic field and in its normal closure}\label{subfactor}
 $\\ $
 Let be $\Gamma=\mathbb{Q}(\sqrt[3]{d})$ a pure cubic field, and  $\mathcal{O}_{\Gamma}$ the ring of integers of $\Gamma$. We write the natural integer $d$ in form $d=ab^2,$ where $a$ and $b$ are cube-free and co-prime positive integers. In his paper \cite{DED},  Dedekind has defined two different types of pure cubic fields as follows: 
\begin{definition}\label{3inG}
Using the same notations as above:
\begin{enumerate}
\item We say that $\Gamma=\mathbb{Q}(\sqrt[3]{d})$ is  of the \textit{\textbf{first kind}} if $3\mathcal{O}_{\Gamma}=\mathcal{P}^3$, where $\mathcal{P}$ is a prime ideal of $\mathcal{O}_{\Gamma}$, in this case, $a^2-b^2\not\equiv 0 \ (mod~9).$
\item We say that $\Gamma=\mathbb{Q}(\sqrt[3]{d})$ is of the \textit{\textbf{second kind}} if $3\mathcal{O}_{\Gamma}=\mathcal{P}^2\mathcal{P}_1$, where $\mathcal{P}\neq \mathcal{P}_1$ are two primes of $\mathcal{O}_{\Gamma}$, in this case, $a^2-b^2\equiv 0 \ (mod~9).$
\end{enumerate}
\end{definition}

Now, let $p$ be a prime number. In the following Proposition, we give the decomposition of the prime $p$ in the pure cubic field $\Gamma=\mathbb{Q}(\sqrt[3]{ab^2})$. We denote by  $\mathcal{P}, \mathcal{P}_{i}$  prime ideals of $\Gamma$, and by $\mathcal{N}$ the absolute norm $\mathcal{N}_{\Gamma/\mathbb{Q}}$.

\begin{proposition}\label{pGAMMA}
$\\ $ Let $p$  a prime number such that $p\neq 3$, then:
\begin{enumerate}
\item  If $p$ divides $ab$ and $p\neq 3$, then $p\mathcal{O}_{\Gamma} = \mathcal{P}^3 , ~ \mathcal{N}(\mathcal{P}) = p$.
\item If $p \nmid 3ab$ and $p\equiv -1\ \ (mod\
3)$, then $p\mathcal{O}_{\Gamma}=\mathcal{P}\mathcal{P}_1$, with $\mathcal{N}(\mathcal{P}) = p$ and $\mathcal{N}(\mathcal{P}_1) =
p^2.$
\item If $p \nmid 3ab$ and $p \equiv 1 \ \
(mod\  3)$, then:
\begin{enumerate}
  \item $p\mathcal{O}_{\Gamma} = \mathcal{P}\mathcal{P}_1 \mathcal{P}_2$ with
 $\mathcal{N}(\mathcal{P})=\mathcal{N}(\mathcal{P}_1)=\mathcal{N}(\mathcal{P}_2)$, if $ab^2$ is a cubic residue modulo $p$; 
  \item $p\mathcal{O}_{\Gamma} = \mathcal{P}$ with
 $\mathcal{N}(\mathcal{P})=p^3$, if $ab^2$ is not a cubic residue modulo $p$.
\end{enumerate}

\end{enumerate}

\end{proposition}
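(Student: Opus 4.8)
The plan is to read off the factorization of $p$ from the factorization modulo $p$ of the minimal polynomial $f(x) = x^3 - d$ of $\theta = \sqrt[3]{d}$, whose discriminant is $\operatorname{disc}(f) = -27d^2$. For every prime $p \nmid 3ab$ one has $p \nmid 3d$ (the rational primes dividing $d = ab^2$ are exactly those dividing $ab$), hence $p \nmid \operatorname{disc}(f) = -27d^2$; since $[\mathcal{O}_\Gamma : \mathbb{Z}[\theta]]^2 \mid \operatorname{disc}(f)$, the prime $p$ does not divide the index $[\mathcal{O}_\Gamma : \mathbb{Z}[\theta]]$, so the Dedekind--Kummer theorem applies and the decomposition of $p$ in $\mathcal{O}_\Gamma$ mirrors exactly the factorization of $x^3 - d$ over $\mathbb{F}_p$. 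This reduces cases $(2)$ and $(3)$ to an elementary analysis of cube roots in $\mathbb{F}_p$.

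For case $(2)$ we have $p \equiv -1 \pmod 3$, so $\gcd(3, p-1) = 1$ and $x \mapsto x^3$ is a bijection of $\mathbb{F}_p^{*}$; thus $d$ has a unique cube root $r \in \mathbb{F}_p$ and $x^3 - d \equiv (x - r)(x^2 + rx + r^2) \pmod p$, the quadratic factor being irreducible since its roots $r\zeta_3, r\zeta_3^2$ lie in $\mathbb{F}_p$ only when $\zeta_3 \in \mathbb{F}_p$, which fails for $p \equiv -1 \pmod 3$. This yields one prime of degree $1$ and one of degree $2$, i.e. $p\mathcal{O}_\Gamma = \mathcal{P}\mathcal{P}_1$ with $\mathcal{N}(\mathcal{P}) = p$ and $\mathcal{N}(\mathcal{P}_1) = p^2$. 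For case $(3)$, $p \equiv 1 \pmod 3$ and $\mathbb{F}_p^{*}$ contains the cube roots of unity; then either $x^3 - d$ splits into three distinct linear factors, which happens precisely when $d = ab^2$ is a cubic residue modulo $p$, giving $p\mathcal{O}_\Gamma = \mathcal{P}\mathcal{P}_1\mathcal{P}_2$ with all residue degrees equal to $1$, or $x^3 - d$ has no root in $\mathbb{F}_p$ and is therefore irreducible, giving $p\mathcal{O}_\Gamma = \mathcal{P}$ inert with $\mathcal{N}(\mathcal{P}) = p^3$. The dichotomy is governed exactly by the cubic residue character of $ab^2$ modulo $p$.

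Case $(1)$, where $p \mid ab$ with $p \neq 3$, is the one place where Dedekind--Kummer cannot be invoked directly, because $f$ is inseparable modulo such $p$; this is the main obstacle, and I would bypass it with a valuation argument. Taking ideals in $(\theta)^3 = (d)$ and letting $\mathcal{P}$ be any prime of $\mathcal{O}_\Gamma$ above $p$ with ramification index $e_{\mathcal{P}}$, one gets $3\, v_{\mathcal{P}}(\theta) = e_{\mathcal{P}}\, v_p(d)$. Because $a$ and $b$ are cube-free and coprime, $v_p(d) = v_p(ab^2) \in \{1, 2\}$ is coprime to $3$, which forces $3 \mid e_{\mathcal{P}}$. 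Since $\sum_i e_i f_i = [\Gamma : \mathbb{Q}] = 3$, this is possible only if there is a single prime above $p$ with $e = 3$ and $f = 1$; hence $p\mathcal{O}_\Gamma = \mathcal{P}^3$ with $\mathcal{N}(\mathcal{P}) = p$, as claimed. This argument simultaneously establishes total ramification and pins down the residue degree, completing the proof.
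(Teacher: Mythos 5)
Your proof is correct, but the comparison here is lopsided: the paper gives no argument for this proposition at all --- its ``proof'' is the single line ``See \cite{DED}'', deferring to Dedekind's original 1900 paper. What you supply is the standard modern, self-contained argument: for $p \nmid 3ab$, the observation $p \nmid \operatorname{disc}(x^3-d) = -27d^2$ forces $p \nmid [\mathcal{O}_{\Gamma}:\mathbb{Z}[\theta]]$, so Dedekind--Kummer reduces everything to factoring $x^3-d$ over $\mathbb{F}_p$, which you analyze correctly in the two cases $3 \nmid p-1$ (cubing is a bijection, giving a linear times an irreducible quadratic factor) and $3 \mid p-1$ (a root exists iff $d$ is a cubic residue, and a cubic with no root is irreducible); for $p \mid ab$ your valuation identity $3\,v_{\mathcal{P}}(\theta) = e_{\mathcal{P}}\,v_p(d)$ with $\gcd(v_p(d),3)=1$ forces $3 \mid e_{\mathcal{P}}$, and then $\sum_i e_i f_i = 3$ leaves a single prime with $e=3$ and residue degree $1$. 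Two small imprecisions, neither of which damages the proof: first, the reason Dedekind--Kummer cannot be invoked blindly in case (1) is not that $x^3-d$ is inseparable modulo $p$ (the theorem does not require separable reduction); it is that $p$ may divide the index $[\mathcal{O}_{\Gamma}:\mathbb{Z}[\theta]]$, which indeed happens exactly when $p \mid b$, whereas for $p \mid a$ the theorem applies and gives $\mathcal{P}^3$ directly --- your valuation argument has the virtue of covering both subcases uniformly without needing any explicit knowledge of $\mathcal{O}_{\Gamma}$. Second, coprimality and cube-freeness of $a$ and $b$ alone only give $v_p(d) \in \{1,2,4\}$ (the case $v_p(b)=2$ is not excluded by them); you need the paper's standing hypothesis that $d$ itself is cube-free to get $v_p(d) \in \{1,2\}$, but since $4$ is also prime to $3$ your conclusion $3 \mid e_{\mathcal{P}}$ survives either way. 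In sum, your route buys the reader a complete, verifiable proof inside the paper, where the original's citation buys only brevity and historical attribution.
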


\begin{proof}
See \cite{DED}.
\end{proof}
The ramification of the prime $3$ need a particular treatment, it is the purpose of the following Proposition:

\begin{proposition}\label{3GAMMA}
$ ~~\\ $ The decomposition into prime factors of $3$ is:
\begin{eqnarray*}
   3 \mathcal{O}_{\Gamma}= \left
    \lbrace
      \begin{aligned}
        \mathcal{P}^3 , \ if \ \ a^2 \not\equiv b^2  \ \ (mod\  9),\\
        \mathcal{P}^2 \mathcal{P}_1, \ if \ \ a^2 \equiv b^2 \ \ (mod\  9).\\
      \end{aligned}
    \right.
\end{eqnarray*}
\end{proposition}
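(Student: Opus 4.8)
The plan is to read off the splitting of $3$ by passing to the completion at $3$, using that the factorization of $3$ in $\mathcal{O}_\Gamma$ is governed by the factorization of the minimal polynomial $f(x)=x^3-ab^2$ of $\theta=\sqrt[3]{ab^2}$ over $\mathbb{Q}_3$ through $\Gamma\otimes_{\mathbb{Q}}\mathbb{Q}_3\simeq\prod_{\mathfrak{P}\mid 3}\Gamma_{\mathfrak{P}}$, the factors matching the irreducible factors of $f$ with their true local invariants $(e,f)$. I favor this local route precisely because $3$ is the delicate prime here: $\mathbb{Z}[\sqrt[3]{ab^2}]$ need not be $3$-maximal, so the naive Kummer--Dedekind reduction of $f$ modulo $3$ is unreliable, whereas the $\mathbb{Q}_3$-factorization is always valid.

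The bridge between the hypothesis and this local picture is an elementary congruence computation. Writing $m=ab^2$ and using $b^6\equiv 1\pmod 9$ when $3\nmid b$, one checks that $a^2\equiv b^2\pmod 9$ is equivalent to $m\equiv\pm 1\pmod 9$; and, describing the cubes of $\mathbb{Z}_3^{*}$ via $\mathbb{Z}_3^{*}\simeq\{\pm1\}\times(1+3\mathbb{Z}_3)$, on which cubing has image $\{\pm1\}\times(1+9\mathbb{Z}_3)$, one sees that this in turn is equivalent to $m$ being a cube in $\mathbb{Q}_3$. This is the one step that must be carried out carefully, and it is where the congruence modulo $9$ in the statement genuinely enters.

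I would then split into cases. If $3\mid ab$ (which already forces $a^2\not\equiv b^2\pmod 9$), I replace the radicand by a representative of $3$-valuation exactly $1$ --- either $m$ itself when $v_3(m)=1$, or $\eta=\theta^2/3$ with $\eta^3=m^2/27$ and $\mathbb{Q}_3(\eta)=\mathbb{Q}_3(\theta)$ when $v_3(m)=2$ --- to obtain an Eisenstein polynomial at $3$; hence $3$ is totally ramified and $3\mathcal{O}_\Gamma=\mathcal{P}^3$. If $3\nmid ab$ and $a^2\equiv b^2\pmod 9$, then $m$ is a cube in $\mathbb{Q}_3$, so $f=(x-\alpha)(x^2+\alpha x+\alpha^2)$ with $\alpha\in\mathbb{Z}_3$; the quadratic factor has discriminant $-3\alpha^2$, and since $\sqrt{-3}\notin\mathbb{Q}_3$ (odd valuation) it is irreducible and cuts out the ramified quadratic extension $\mathbb{Q}_3(\sqrt{-3})=\mathbb{Q}_3(\zeta_3)$. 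The linear and quadratic factors then give $3\mathcal{O}_\Gamma=\mathcal{P}^2\mathcal{P}_1$ with $\mathcal{N}(\mathcal{P})=\mathcal{N}(\mathcal{P}_1)=3$.

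The remaining case $3\nmid ab$ with $a^2\not\equiv b^2\pmod 9$ is the one I expect to be the main obstacle: here $m$ is a non-cube unit, so $f$ is irreducible over $\mathbb{Q}_3$ and there is a single prime above $3$ with $ef=3$, and the delicate point is to exclude the unramified cubic (i.e.\ to force $e=3$, not $f=3$). I would settle this by a parity argument on discriminants. Since $\mathrm{disc}(f)=-27m^2$ has $v_3=3$, and in $K=\mathbb{Q}_3(\theta)$ one has $v_3(\mathrm{disc}(f))=2\,v_3[\mathcal{O}_K:\mathbb{Z}_3[\theta]]+v_3(\mathrm{disc}(K/\mathbb{Q}_3))$, an unramified $K/\mathbb{Q}_3$ would force $v_3(\mathrm{disc}(K/\mathbb{Q}_3))=0$ and hence the even value $2\,v_3[\mathcal{O}_K:\mathbb{Z}_3[\theta]]$, contradicting $v_3=3$. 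Thus $K/\mathbb{Q}_3$ is ramified, necessarily totally since its degree is prime, so $3\mathcal{O}_\Gamma=\mathcal{P}^3$. Collecting the three cases yields exactly the stated dichotomy.
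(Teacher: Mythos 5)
Your proof is correct, and it takes a genuinely different route from the paper, which in fact offers no argument at all: its proof of Proposition \ref{3GAMMA} is the single line ``See \cite{DED}''. The classical proof behind that citation is global: Dedekind determines $\mathcal{O}_\Gamma$ explicitly, showing that $\{1,\theta,\theta^{2}/b\}$ (where $\theta=\sqrt[3]{ab^{2}}$) is an integral basis when $a^{2}\not\equiv b^{2}\pmod 9$, while when $a^{2}\equiv b^{2}\pmod 9$ an extra element with denominator $3$ becomes integral, the ring $\mathbb{Z}[\theta,\theta^{2}/b]$ acquires index $3$ in $\mathcal{O}_\Gamma$, and $\mathcal{D}_\Gamma$ drops from $-27a^{2}b^{2}$ to $-3a^{2}b^{2}$; the factorization of $3$ is then read off in the known maximal order. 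You instead complete at $3$ and factor $x^{3}-ab^{2}$ over $\mathbb{Q}_3$, which is legitimate and, as you note, neatly sidesteps the non-maximality of $\mathbb{Z}[\theta]$ at $3$ --- precisely the issue that forces Dedekind's integral-basis computation. Your key reduction (for $3\nmid ab$: $a^{2}\equiv b^{2}\pmod 9\Leftrightarrow ab^{2}\equiv\pm 1\pmod 9\Leftrightarrow ab^{2}\in(\mathbb{Z}_3^{*})^{3}$, since the cubes in $\mathbb{Z}_3^{*}$ are exactly $\pm(1+9\mathbb{Z}_3)$) is right, and the three cases are all sound: the Eisenstein shift $\eta=\theta^{2}/3$ when $v_3(ab^{2})=2$; the splitting $f=(x-\alpha)(x^{2}+\alpha x+\alpha^{2})$ with the quadratic factor cutting out the ramified field $\mathbb{Q}_3(\sqrt{-3})$, giving $\mathcal{P}^{2}\mathcal{P}_1$; and the parity argument comparing $v_3(\operatorname{disc}f)=3$ with $2\,v_3[\mathcal{O}_K:\mathbb{Z}_3[\theta]]+v_3(\mathfrak{d}_{K/\mathbb{Q}_3})$ to exclude the unramified cubic. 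What your approach buys is a short, self-contained proof that isolates exactly why the modulus is $9$ (it is the level at which cubes stabilize in $\mathbb{Z}_3^{*}$); what Dedekind's approach buys is strictly more information --- the integral basis and the discriminant --- which this paper needs elsewhere (Proposition \ref{pK} is stated in terms of $\mathcal{D}_\Gamma$). One optional simplification: in your last case you could replace the discriminant-parity step by noting that the unramified cubic extension of $\mathbb{Q}_3$ is Galois, whereas $\mathbb{Q}_3(\theta)$ with $\theta^{3}=ab^{2}$ a non-cube cannot be Galois without containing $\zeta_3$, which has degree $2$; but your argument is equally valid.
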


\begin{proof}
See \cite{DED}.
\end{proof}

The ideal factorization rules for the $3$rd cyclotomic field $\mathrm{k_0}$ (see \cite{Clas}) is as follows:
\begin{itemize}
\item[$(i)$] $3\mathcal{O}_{\mathrm{k_0}}=\lambda^2=(1-\zeta_3)^2$;
\item[$(ii)$] $p\mathcal{O}_{\mathrm{k_0}}=\pi_1\pi_2$ in $\mathrm{k_0}$ if $p\equiv 1  \pmod  3$;
\item[$(iii)$] $q\mathcal{O}_{\mathrm{k_0}}=\mathsf{q}$ in $\mathrm{k_0}$ if $q\equiv -1  \pmod  3$.
\end{itemize}

Next, let $\mathrm{k}$ be the normal closure of  $\Gamma$. We note by $\mathcal{O}_{\mathrm{k}}$ the ring of integers of $\mathrm{k}$, $\mathfrak{P}$ and $\mathfrak{P}_s$ are prime ideals of $\mathrm{k},$ $\mathcal{N}=\mathcal{N}_{k/\mathbb{Q}}$ the norm of $\mathrm{k}$ on $\mathbb{Q}.$ Combining the ideal factorization rules for $\Gamma$ with those of the field $\mathrm{k_0}$. The decomposition of the prime $  3 $ in  $\mathrm{k}$  is the purpose of the following Theorem:

\begin{proposition}\label{3K}
$\\ $ The prime $ 3$  decomposes in $ \emph{k} $ as follows:
\begin{eqnarray*}
   3 \mathcal{O}_{\mathrm{k}}= \left
    \lbrace
      \begin{aligned}
        \mathfrak{P}^6 , \ si \ \ a^2 \not\equiv b^2  \ \ (mod\  9),\\
        \mathfrak{P}_1^2 \mathfrak{P}_2^2 \mathfrak{P}_3^2, \ si \ \ a^2 \equiv b^2 \ \ (mod\  9).\\
      \end{aligned}
    \right.
\end{eqnarray*}
\end{proposition}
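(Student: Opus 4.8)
The plan is to use the fact that $\mathrm{k}=\mathbb{Q}(\sqrt[3]{d},\zeta_3)$ is the normal closure of $\Gamma$, i.e. the splitting field of $X^3-d$, and is therefore a \emph{Galois} extension of $\mathbb{Q}$ of degree $6$. The consequence I want is that, for the rational prime $3$, all prime ideals $\mathfrak{P}$ of $\mathrm{k}$ above $3$ share one common ramification index $e$ and one common residue degree $f$, and if $g$ is their number then $efg=6$. The entire argument then reduces to determining $e$ and $f$, which I would do by inserting the known local data at $3$ coming from the two intermediate fields $\Gamma$ and $\mathrm{k}_0$ into the multiplicativity of $e$ and $f$ along the two towers $\mathbb{Q}\subset\Gamma\subset\mathrm{k}$ and $\mathbb{Q}\subset\mathrm{k}_0\subset\mathrm{k}$.

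First I would record the inputs. By Proposition \ref{3GAMMA}, in the first kind ($a^2\not\equiv b^2\pmod 9$) one has $3\mathcal{O}_{\Gamma}=\mathcal{P}^3$, so $e(\mathcal{P}/3)=3$; in the second kind ($a^2\equiv b^2\pmod 9$) one has $3\mathcal{O}_{\Gamma}=\mathcal{P}^2\mathcal{P}_1$, so $e(\mathcal{P}/3)=2$ and $e(\mathcal{P}_1/3)=1$, the residue degrees being both $1$ as forced by $\sum_i e_i f_i=[\Gamma:\mathbb{Q}]=3$. On the other side, the factorization rule $3\mathcal{O}_{\mathrm{k}_0}=\lambda^2$ gives $e(\lambda/3)=2$. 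Since for any $\mathfrak{P}\mid 3$ one has $e=e(\mathfrak{P}/\mathcal{P})\,e(\mathcal{P}/3)$ through $\Gamma$ and $e=e(\mathfrak{P}/\lambda)\,e(\lambda/3)$ through $\mathrm{k}_0$, multiplicativity yields $3\mid e$ in the first kind and $2\mid e$ in both kinds.

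In the first kind these two divisibilities combine to $6\mid e$, whence $e=6$ and hence $f=g=1$, that is $3\mathcal{O}_{\mathrm{k}}=\mathfrak{P}^6$. In the second kind I would argue instead from the prime $\mathcal{P}_1$: because $e(\mathcal{P}_1/3)=1$ and $[\mathrm{k}:\Gamma]=2$, multiplicativity forces $e=e(\mathfrak{P}/\mathcal{P}_1)\le 2$, and combined with $2\mid e$ this gives $e=2$. Then $e(\mathfrak{P}/\mathcal{P}_1)=2$, so $\mathcal{P}_1$ is totally ramified in the quadratic step $\mathrm{k}/\Gamma$; this forces $f(\mathfrak{P}/\mathcal{P}_1)=1$, whence $f=f(\mathfrak{P}/\mathcal{P}_1)\,f(\mathcal{P}_1/3)=1$. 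Substituting $e=2$, $f=1$ into $efg=6$ gives $g=3$, i.e. $3\mathcal{O}_{\mathrm{k}}=\mathfrak{P}_1^2\mathfrak{P}_2^2\mathfrak{P}_3^2$.

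The step that needs the most care — the only one not settled by a bare parity count — is the determination of $f$ in the second kind, since a priori $e=2$ with $efg=6$ still leaves open the inert configuration $(f,g)=(3,1)$. What excludes it, and is really the crux, is the constancy of $f$ over all primes above $3$ (a consequence of $\mathrm{k}/\mathbb{Q}$ being Galois) together with the totally ramified prime $\mathcal{P}_1$, which produces at least one $\mathfrak{P}$ with $f(\mathfrak{P}/3)=1$; by constancy every such residue degree equals $1$. I would therefore foreground the Galois-normality of $\mathrm{k}$ as the structural fact carrying the proof, and present the tower computations as the routine bookkeeping it reduces to.
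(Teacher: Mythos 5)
Your proof is correct and follows essentially the same route as the paper: both arguments combine the known decomposition of $3$ in $\Gamma$ (Proposition \ref{3GAMMA}) with its ramification in $\mathrm{k}_0$ to pin down the factorization in $\mathrm{k}$. The only difference is one of rigor, not of method --- the paper simply asserts ``hence $3\mathcal{O}_{\mathrm{k}}=\mathfrak{P}^6$'' and ``it follows that $3\mathcal{O}_{\mathrm{k}}=\mathfrak{P}_1^2\mathfrak{P}_2^2\mathfrak{P}_3^2$,'' whereas you supply the bookkeeping (constancy of $e$ and $f$ in the Galois extension $\mathrm{k}/\mathbb{Q}$, multiplicativity along the two towers, and the exclusion of the inert configuration $(f,g)=(3,1)$ via the unramified prime $\mathcal{P}_1$) that actually justifies those conclusions.
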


\begin{proof}
 $\\ $ We have $3$ ramifies in the quadratic field $\mathrm{k}_0=\mathbb{Q}(\zeta_3)$.
 \begin{itemize}
\item[1)] Suppose that $\Gamma$ is the first kind,
 then by Proposition \ref{3GAMMA} we have $3\mathcal{O}_{\Gamma}=\mathcal{P}^3$.
 Hence,
$3\mathcal{O}_{k}=\mathfrak{P}^6$.
\item[2)] Conversely, suppose that $\Gamma$ is of second kind, 
then $3\mathcal{O}_{\Gamma}=\mathcal{P}^2\mathcal{P}_1$.
It follows that  $3\mathcal{O}_{k}=\mathfrak{P}_1^2\mathfrak{P}_2^2\mathfrak{P}_3^2$.
\end{itemize}
\end{proof}
However, we have the following Proposition in which we characterize the decomposition of prime ideals of $p\neq 3$ in $\mathrm{k}$.
\begin{proposition}\label{pK}
$\\ $ Let $p$  a prime number such that $p\neq 3$, then:
\begin{enumerate}
\item If $p$ divides $\mathcal{D}_{\Gamma}$, then:
\begin{enumerate}
\item $p\mathcal{O}_{\mathrm{k}}=\mathfrak{P}_1^3\mathfrak{P}_2^3$, with $\mathcal{N}(\mathfrak{P}_1)=\mathcal{N}(\mathfrak{P}_2)=p$, if and only if $-3$ is a quadratic residue modulo $p$.
  \item $p \mathcal{O}_{\mathrm{k}}=\mathfrak{P}^3$, with $\mathcal{N}(\mathfrak{P})=p^2$, if and only if $-3$ is not a quadratic residue modulo $p$.
\end{enumerate}
\item If $p$ does not divides  $\mathcal{D}_{\Gamma}$ and $p \equiv 1~(mod~3),$ then:
\begin{enumerate}
 \item $p$ decomposes completely  in $\emph{k}$ if and only if $\mathcal{D}_{\Gamma}$ is a cubic residue modulo $p$.
  \item $p \mathcal{O}_{\mathrm{k}}=\mathfrak{P}_1\mathfrak{P}_2$, with $\mathcal{N}(\mathfrak{P}_1)=\mathcal{N}(\mathfrak{P}_2)=p^3$, if and only if $\mathcal{D}_{\Gamma}$ is not a cubic residue modulo $p$.
\end{enumerate}
\item If $p$ does not divides  $\mathcal{D}_{\Gamma}$ and $p \equiv - 1~(mod~3),$ then:
 $p \mathcal{O}_{\mathrm{k}}=\mathfrak{P}_1\mathfrak{P}_2\mathfrak{P}_3$, with $\mathcal{N}(\mathfrak{P}_1)=\mathcal{N}(\mathfrak{P}_2)=\mathcal{N}(\mathfrak{P}_3)=p^2$, if and only if $-3$ is not a quadratic residue modulo $p$.
\end{enumerate}

\end{proposition}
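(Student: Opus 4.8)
The plan is to view $\mathrm{k}$ at once as the compositum $\Gamma\cdot\mathrm{k}_0$ and as the Galois closure of $\Gamma$, so that $\operatorname{Gal}(\mathrm{k}/\mathbb{Q})\cong S_3$ with $\Gamma=\mathrm{k}^{\langle\tau\rangle}$ and $\mathrm{k}_0=\mathrm{k}^{\langle\sigma\rangle}$. For a fixed prime $p\neq 3$ I would read off the splitting of $p$ in the two subfields --- in $\Gamma$ from Proposition \ref{pGAMMA}, and in $\mathrm{k}_0$ from the cyclotomic factorization rules --- and then glue the data together in $\mathrm{k}$ using four rigid constraints: multiplicativity of the ramification index $e$ and residue degree $f$ along the towers $\mathbb{Q}\subset\Gamma\subset\mathrm{k}$ and $\mathbb{Q}\subset\mathrm{k}_0\subset\mathrm{k}$; the fact that $[\mathrm{k}:\Gamma]=2$ while $[\mathrm{k}:\mathrm{k}_0]=3$ is prime, so the relative residue degrees lie in $\{1,2\}$ and $\{1,3\}$ respectively; the fundamental identity $efg=[\mathrm{k}:\mathbb{Q}]=6$; and the Galois uniformity of $e$ and $f$ over all primes above $p$. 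I would also record at the outset that, since $p\neq 3$, $p$ is unramified in $\mathrm{k}_0$, together with the classical equivalence (for $p\neq 3$) that ``$-3$ is a quadratic residue modulo $p$'' $\iff$ $p\equiv 1\pmod 3$ $\iff$ $p$ splits in $\mathrm{k}_0$; this is what converts the hypotheses on the residue symbol of $-3$ into splitting behaviour in $\mathrm{k}_0$.

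For $p\mid\mathcal{D}_\Gamma$, equivalently $p\mid ab$, Proposition \ref{pGAMMA}(1) gives $p\mathcal{O}_\Gamma=\mathcal{P}^3$, so $3\mid e$; since $p$ is unramified in $\mathrm{k}_0$ one also has $e=e(\mathrm{k}/\mathrm{k}_0)\le[\mathrm{k}:\mathrm{k}_0]=3$, whence $e=3$, and $\mathrm{k}_0/\mathbb{Q}$ can only influence $f$. If $-3$ is a quadratic residue, $p$ splits in $\mathrm{k}_0$ and $f\in\{1,3\}$, so $efg=6$ forces $f=1$, $g=2$, i.e.\ $p\mathcal{O}_{\mathrm{k}}=\mathfrak{P}_1^3\mathfrak{P}_2^3$ of norm $p$. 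If instead $-3$ is a non-residue, $p$ is inert in $\mathrm{k}_0$ and $2\mid f$, so $e=3$ together with $f\ge 2$ already saturates $efg=6$, giving $f=2$, $g=1$, i.e.\ $p\mathcal{O}_{\mathrm{k}}=\mathfrak{P}^3$ of norm $p^2$.

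When $p\nmid\mathcal{D}_\Gamma$ and $p\equiv -1\pmod 3$, Proposition \ref{pGAMMA}(2) gives a degree-one and a degree-two prime in $\Gamma$, so $p$ is unramified ($e=1$); inertia in $\mathrm{k}_0$ makes $f$ even, while the degree-one prime of $\Gamma$ caps the common residue degree at $[\mathrm{k}:\Gamma]=2$, whence $f=2$, $g=3$ and $p\mathcal{O}_{\mathrm{k}}=\mathfrak{P}_1\mathfrak{P}_2\mathfrak{P}_3$ of norm $p^2$; here the stated non-residue condition is automatic from $p\equiv-1\pmod 3$. The subtlest case is $p\nmid\mathcal{D}_\Gamma$ with $p\equiv 1\pmod 3$, where $p$ splits in $\mathrm{k}_0$ and the dichotomy is controlled by the \emph{cubic} residue character rather than by ramification or parity. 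Here I would invoke the Galois-closure property that a prime splits completely in $\mathrm{k}$ if and only if it splits completely in $\Gamma$ (the normal core of $\langle\tau\rangle$ in $S_3$ is trivial): combined with Proposition \ref{pGAMMA}(3) this yields complete splitting, $e=f=1$, $g=6$, exactly under the cubic-residue hypothesis; in the complementary case $p$ is inert in $\Gamma$, so $3\mid f$, and since $p$ splits in $\mathrm{k}_0$ we have $f\in\{1,3\}$, forcing $f=3$, $g=2$ and $p\mathcal{O}_{\mathrm{k}}=\mathfrak{P}_1\mathfrak{P}_2$ of norm $p^3$.

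Since in every item the two alternatives are mutually exclusive and jointly exhaustive, and each forces a distinct factorization type, proving a single implication in each case automatically delivers the stated biconditionals. I expect the genuine difficulty to lie in this last case: one must pin down the cubic-residue criterion for complete splitting and reconcile the condition on $\mathcal{D}_\Gamma$ with the cubic character of $ab^2$ supplied by Proposition \ref{pGAMMA}, and then argue --- via the decomposition group inside $S_3$, or via the tower constraints above --- that no factorization type other than the claimed $\mathfrak{P}_1\mathfrak{P}_2$ is compatible with $p$ splitting in $\mathrm{k}_0$ while being inert in $\Gamma$. The ramified and $p\equiv-1$ cases, by contrast, are essentially forced by the numerical identity $efg=6$ once the behaviour in the two subfields is known.
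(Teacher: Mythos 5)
Your proposal is correct and takes essentially the same approach as the paper: it combines the factorization of $p$ in $\Gamma$ (Proposition \ref{pGAMMA}) with the factorization in $\mathrm{k}_0$ given by the cyclotomic rules and glues the data in the compositum $\mathrm{k}$, where your explicit $efg=6$ bookkeeping, tower multiplicativity and Galois uniformity merely spell out the gluing the paper leaves implicit, and the equivalence ``$p\equiv 1\pmod 3 \iff -3$ is a quadratic residue mod $p$'' that you quote as classical is the one computation the paper writes out via quadratic reciprocity. The reconciliation of ``$\mathcal{D}_\Gamma$ is a cubic residue mod $p$'' with the condition ``$ab^2$ is a cubic residue mod $p$'' of Proposition \ref{pGAMMA}, which you rightly flag as the delicate point, is passed over silently in the paper's proof as well, so it is not a gap relative to the paper.
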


\begin{proof}
\begin{enumerate}
\item We use Proposition \ref{pGAMMA} and the decomposition of prime ideals in the quadratic fields $\mathrm{k}_0=\mathbb{Q}(\zeta_3)$.

\item Suppose that $p$ does not divide $\mathcal{D}_{\Gamma}$ and $p\equiv 1~(mod~3)$, then $-3$ is a quadratic residue modulo $p$, then the multiplication formula gives 
 $$\left( \dfrac{-3}{p}\right)=\left( \dfrac{-1}{p}\right)\left( \dfrac{3}{p}\right).$$
  On the one hand, by the Euler's Theorem we have
   $$\left( \dfrac{-1}{p}\right)=(-1)^{(p-1)/2},$$
  On the other hand, the quadratic reciprocity law gives 
  $$\left( \dfrac{p}{3}\right)\left( \dfrac{3}{p}\right)=(-1)^{(p-1)/2},$$
    since $p\equiv 1~(mod~3)$, then $p$ is a square modulo $3$, which gives $\left( \dfrac{p}{3}\right)=1$, so 
$$\left( \dfrac{3}{p}\right)=(-1)^{(p-1)/2}.$$
   Then
   $$\left( \dfrac{-3}{p}\right)=((-1)^{(p-1)/2})^2=(-1)^{p-1}=1.$$
 Thus, $p$ decomposes completely in $\mathrm{k}_0$. 
 \begin{enumerate}
 \item If $\mathcal{D}_{\Gamma}$ is a cubic residue modulo $p$, 
then by Proposition \ref{pGAMMA} we have $p$ split completely in $\Gamma$. Hence $p$ split completely in $\mathrm{k}$.
 \item If $\mathcal{D}_{\Gamma}$ is not a cubic residue modulo $p$, we have $p$ remains prime in $\Gamma$. Hence $p\mathcal{O}_{\mathrm{k}}=\mathfrak{P}_1\mathfrak{P}_2$.
 \end{enumerate}
  \item We have $p\mathcal{O}_{\Gamma}=\mathcal{P}\mathcal{P}_1$, and $p$ remains inert in $\mathrm{k}_0$, hence the result.
 
\end{enumerate}
\end{proof}
\begin{remark}
In the preceding Proposition \ref{pK}, the situation $p\mathcal{O}_{\mathrm{k}}=\mathfrak{P}_1\mathfrak{P}_2$ is never happens because  if $p\equiv -1~(mod~3),$ we have  always $ \left( \dfrac{-3}{p}\right)=-1. $
\end{remark}


\section{\textbf{The generators of $C_{\mathrm{k},3}$} }\label{Sect3}

First, we let $C_{\mathrm{k},3}^{(\sigma)}=\lbrace \mathcal{A}\in C_{\mathrm{k},3} ~| ~ \mathcal{A}^{\sigma}=\mathcal{A} \rbrace$ be the group of ambiguous ideal classes of $\mathrm{k}/\mathrm{k}_0$, where $\sigma$ is a generator of $\operatorname{Gal}\left(\mathrm{k}/\mathrm{k}_0\right)$, and put $q^{*}=0$ or $1$ according to $\zeta_{3}$ is not norm or norm of
an element of $\mathrm{k}\backslash \{0\}$. Let $t$ be the
number of primes ramifies in $\mathrm{k}/\mathrm{k}_{0}$. Then according to \cite{GERTH1},
 we have
 $$|C_{\mathrm{k},3}^{(\sigma)}|=3^{t-2+q^{*}}.$$

 If we denote by $C_{\mathrm{k}_0,3}$ the Sylow $3$-subgroup of the ideal class group of $\mathrm{k}_0$,  $C_{\mathrm{k}_0,3}=\lbrace 1 \rbrace$. Let be $ C_{\mathrm{k},3}^{(1-\sigma)}=
 \lbrace \mathcal{A}^{(1-\sigma)}~|~ \mathcal{A} \in C_{\mathrm{k},3} \rbrace $.  By the exact sequence :
$$ 1 \longrightarrow C_{\mathrm{k},3}^{(\sigma)}  \longrightarrow C_{\mathrm{k},3} \overset{1-\sigma}{\longrightarrow } C_{\mathrm{k},3} \longrightarrow C_{\mathrm{k},3}/C_{\mathrm{k},3}^{1-\sigma} \longrightarrow 1 $$
we deduce that $$|C_{\mathrm{k},3}^{(\sigma)}|= | C_{\mathrm{k},3}/C_{\mathrm{k},3}^{1-\sigma} |.$$
 The fact that $C_{\mathrm{k},3}^{(\sigma)}$ and $C_{\mathrm{k},3}/C_{\mathrm{k},3}^{1-\sigma}$  are elementary abelian $3$-groups imply that:  
$$\operatorname{rank}\, C_{\mathrm{k},3}^{(\sigma)}= \operatorname{rank}\,( C_{\mathrm{k},3}/C_{\mathrm{k},3}^{1-\sigma}).$$
 Define the $3$-group $  C_{\mathrm{k},3}^{(1-\sigma)^{i}} $ for each $i\in \mathbb{N}$  by
  $$ C_{\mathrm{k},3}^{(1-\sigma)^{i}}=
 \lbrace \mathcal{A}^{(1-\sigma)^{i}}~|~ \mathcal{A} \in C_{\mathrm{k},3} \rbrace,$$ and let $s$ be the positive integer such that
 $C_{\mathrm{k},3}^{(\sigma)}\subseteq C_{\mathrm{k},3}^{(1-\sigma)^{s-1}} $
 and $C_{\mathrm{k},3}^{(\sigma)} \not \subseteq C_{\mathrm{k},3}^{(1-\sigma)^{s}} $.\\
 
 The following Proposition gives the structure of the $3$-class group $C_{\mathrm{k},3}$ when $27$ divides exactly the class number of $\mathrm{k}$:

\begin{proposition}\label{39}
Let be $\Gamma$ a pure cubic field,
\(\mathrm{k}\) its normal closure and \(u\) the index of units defined as above, then:
\begin{itemize}
\item[1)] 
$ C_{\mathrm{k},3}\simeq \mathbb{Z}/9\mathbb{Z}\times \mathbb{Z}/3\mathbb{Z}
\Leftrightarrow [ C_{\Gamma,3}\simeq \mathbb{Z}/9\mathbb{Z}\,\,
\text{and} \,\,u=1]$; 
\item[2)] $ C_{\mathrm{k},3}\simeq \mathbb{Z}/3\mathbb{Z}\times
\mathbb{Z}/3\mathbb{Z} \times \mathbb{Z}/3\mathbb{Z} \Leftrightarrow
[ C_{\Gamma,3}\simeq \mathbb{Z}/3\mathbb{Z}\times
\mathbb{Z}/3\mathbb{Z} \,\, \text{and} \,\,u=1]$.
\end{itemize}
\end{proposition}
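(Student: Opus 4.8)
The plan is to reduce the structure of $C_{\mathrm{k},3}$ to that of $C_{\Gamma,3}$ using two ingredients: the classical class number relation for a pure cubic field and its normal closure, and the decomposition of $C_{\mathrm{k},3}$ under the action of $\tau$. For the first, I would invoke the analytic class number formula applied to the factorization $\zeta_{\mathrm{k}}(s)\,\zeta_{\mathbb{Q}}(s)^2=\zeta_{\mathrm{k}_0}(s)\,\zeta_{\Gamma}(s)^2$ coming from the representation theory of $\operatorname{Gal}(\mathrm{k}/\mathbb{Q})\cong S_3$; combined with the conductor--discriminant relation $|\mathcal{D}_{\mathrm{k}}|=3\,\mathcal{D}_{\Gamma}^2$ and $w_{\mathrm{k}}=6$, this collapses to $h_{\mathrm{k}}R_{\mathrm{k}}=h_{\Gamma}^2R_{\Gamma}^2$, and after expressing the regulator quotient $R_{\Gamma}^2/R_{\mathrm{k}}$ through the unit index $u=[E_{\mathrm{k}}:E_0]$ one obtains
$$h_{\mathrm{k}}=\tfrac{1}{3}\,u\,h_{\Gamma}^2 .$$
I would also record the classical fact that $u\in\{1,3\}$, which is what makes the order equation below solvable.

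For the second ingredient, since $\tau$ has order $2$ and $C_{\mathrm{k},3}$ is a $3$-group, $\tau$ is diagonalizable and gives a direct decomposition $C_{\mathrm{k},3}=C_{\mathrm{k},3}^{+}\oplus C_{\mathrm{k},3}^{-}$, where $C_{\mathrm{k},3}^{\pm}=\{\mathcal{A}\in C_{\mathrm{k},3}\mid \mathcal{A}^{\tau}=\mathcal{A}^{\pm 1}\}$. The extension-of-ideals map $j\colon C_{\Gamma,3}\to C_{\mathrm{k},3}$ and the norm $N_{\mathrm{k}/\Gamma}$ satisfy $N_{\mathrm{k}/\Gamma}\circ j=[\mathrm{k}:\Gamma]=2$, invertible modulo $3$, so $j$ is injective; and since $j\circ N_{\mathrm{k}/\Gamma}=1+\tau$ acts as multiplication by $2$ on $C_{\mathrm{k},3}^{+}$, every $\tau$-fixed class lies in the image of $j$. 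Hence $j$ identifies $C_{\Gamma,3}$ with $C_{\mathrm{k},3}^{+}$, i.e. $C_{\mathrm{k},3}^{+}\simeq C_{\Gamma,3}$ with no $3$-capitulation.

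With these in hand the two equivalences are bookkeeping on orders and exponents. For $(\Leftarrow)$: if $u=1$ and $|C_{\Gamma,3}|=9$, the class number relation gives $|C_{\mathrm{k},3}|=\tfrac13\cdot 1\cdot 81=27$, whence $|C_{\mathrm{k},3}^{-}|=27/9=3$ and $C_{\mathrm{k},3}^{-}\simeq\mathbb{Z}/3\mathbb{Z}$; forming the direct product $C_{\mathrm{k},3}\simeq C_{\Gamma,3}\times\mathbb{Z}/3\mathbb{Z}$ then yields $\mathbb{Z}/9\mathbb{Z}\times\mathbb{Z}/3\mathbb{Z}$ in case $1)$ and $(\mathbb{Z}/3\mathbb{Z})^3$ in case $2)$. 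For $(\Rightarrow)$: in either case $|C_{\mathrm{k},3}|=27$, so $u\,h_{\Gamma,3}^2=81$; as $u\in\{1,3\}$ and $h_{\Gamma,3}$ is a power of $3$, the value $u=3$ is impossible, forcing $u=1$ and $|C_{\Gamma,3}|=9$. The structure of $C_{\Gamma,3}\simeq C_{\mathrm{k},3}^{+}$ is then read off from that of $C_{\mathrm{k},3}$ via exponents: since $C_{\mathrm{k},3}=C_{\mathrm{k},3}^{+}\oplus C_{\mathrm{k},3}^{-}$ with $\exp C_{\mathrm{k},3}^{-}=3$, one has $\exp C_{\mathrm{k},3}=\max(\exp C_{\mathrm{k},3}^{+},3)$, so $C_{\mathrm{k},3}\simeq\mathbb{Z}/9\mathbb{Z}\times\mathbb{Z}/3\mathbb{Z}$ forces $\exp C_{\mathrm{k},3}^{+}=9$ and hence $C_{\Gamma,3}\simeq\mathbb{Z}/9\mathbb{Z}$, while $C_{\mathrm{k},3}\simeq(\mathbb{Z}/3\mathbb{Z})^3$ forces $\exp C_{\mathrm{k},3}^{+}=3$ and hence $C_{\Gamma,3}\simeq(\mathbb{Z}/3\mathbb{Z})^2$.

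I expect the main obstacle to lie in establishing the two structural inputs cleanly, not in the final bookkeeping. Concretely, the delicate points are the precise constant $\tfrac13$ and the appearance of $u$ in the class number relation (which rests on the regulator computation relating $R_{\mathrm{k}}$ to $R_{\Gamma}^2$ through the subgroup $E_0$, together with the classical bound $u\le 3$), and the verification that $j$ surjects onto the whole $\tau$-fixed part $C_{\mathrm{k},3}^{+}$. Once $|C_{\mathrm{k},3}|=27$ and $C_{\mathrm{k},3}^{+}\simeq C_{\Gamma,3}$ are secured, both directions of both statements follow immediately.
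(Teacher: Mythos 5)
Your proposal is correct and takes essentially the same route as the paper: the class number relation $h_{\mathrm{k},3}=\frac{u}{3}\,h_{\Gamma,3}^{2}$ (which the paper simply cites as Theorem 14.1 of Barrucand--Cohn) combined with the $\tau$-decomposition $C_{\mathrm{k},3}\simeq C_{\Gamma,3}\times C_{\mathrm{k},3}^{-}$ (cited as Lemmas 2.1 and 2.2 of Gerth), followed by the same order-and-exponent bookkeeping, including the same parity argument ruling out $u=3$. The only difference is that you re-derive the two cited ingredients (via the zeta-function factorization and the $j$/norm eigenspace argument) rather than quoting them, which is sound but not a different method.
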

\begin{proof}
\begin{itemize}
\item[1)] Assume that $C_{\mathrm{k},3}\simeq \mathbb{Z}/9\mathbb{Z}\times
\mathbb{Z}/3\mathbb{Z}$, then $h_{\mathrm{k},3}=27$. According to Theorem $14.1$ of \cite{B-C}, we have
$27=\frac{u}{3}\cdot h_{\Gamma,3}^{2}$, then $u=1$ because otherwise
$27$ will be a square in $\mathbb{N}$, which is a contradiction. Then
\(h_{\Gamma,3}^{2}=3^4\) and \(h_{\Gamma,3}=9\).\\
On the other hand, by Lemma $2.1$ and Lemma $2.2$ of \cite{GERTH2} we have
 \( C_{\mathrm{k},3}\simeq  C_{\Gamma,3}
 \times C_{\mathrm{k},3}^{-}\), then
  \( | C_{\mathrm{k},3}^{-}|=3\).
  Since \(C_{\mathrm{k},3}\) is of type
  \(\left(9,3\right)\), we deduce that \(C_{\mathrm{k},3}^{-}\) is a
  cyclic $3$-group of order $3$ and \(C_{\mathrm{k},3}^{+}\) is a cyclic $3$-group of
  order $9$. Therefore:
  \[u=1 \,\, \text{and}\,\,C_{\Gamma,3}\simeq \mathbb{Z}/9\mathbb{Z} .\]
  Reciprocally, assume that \(u=1\) and \(C_{\Gamma,3}\simeq
  \mathbb{Z}/9\mathbb{Z}\). By Theorem $14.1$ of \cite{B-C}, we deduce that
   \(|C_{\mathrm{k},3}|=
    \frac{1}{3}\cdot |C_{\Gamma,3}|^{2}
    \), then
    \( | C_{\mathrm{k},3}|=27\)
    and
    \( | C_{\mathrm{k},3}^{-}|=3\). Thus: \[C_{\mathrm{k},3}\simeq C_{\Gamma,3}
     \times C_{\mathrm{k},3}^{-} \simeq \mathbb{Z}/9\mathbb{Z}\times
\mathbb{Z}/3\mathbb{Z}.\]
 \item[2)] We have the same proof as above.
\end{itemize}
\end{proof}


\begin{lemma}\label{11}
 Let $\mathrm{k}=\mathbb{Q}(\sqrt[3]{p}, \zeta_3)$, where $p$ is a prime number
 such that $p\equiv 1~(\bmod~3)$. Let $C_{\mathrm{k},3}^{(\sigma)}$ be
the ambiguous ideal class group of $\mathrm{k}/\mathbb{Q}(\zeta_3) $, where $\sigma$ is a generator of $\operatorname{Gal}\left(\mathrm{k}/\mathbb{Q}(\zeta_3)\right)$. Then $\vert C_{\mathrm{k},3}^{(\sigma)}\vert =3$.
\end{lemma}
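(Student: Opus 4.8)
The statement claims that the ambiguous class group has order exactly $3$.

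**Strategy via the genus formula.** The plan is to apply the formula $|C_{\mathrm{k},3}^{(\sigma)}| = 3^{t-2+q^*}$ stated earlier in this section (attributed to \cite{GERTH1}), where $t$ is the number of primes of $\mathrm{k}_0$ ramifying in $\mathrm{k}/\mathrm{k}_0$ and $q^* \in \{0,1\}$ records whether $\zeta_3$ is a norm from $\mathrm{k}$. Thus the whole problem reduces to computing the two integers $t$ and $q^*$ for the field $\mathrm{k} = \mathbb{Q}(\sqrt[3]{p},\zeta_3)$ with $p \equiv 1 \pmod 3$. The target $|C_{\mathrm{k},3}^{(\sigma)}| = 3$ forces $t - 2 + q^* = 1$, so I expect to show either $t = 3, q^* = 0$ or $t = 2, q^* = 1$, and I anticipate the first combination.

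**Step 1: Determine $t$.** First I would identify which primes of $\mathrm{k}_0 = \mathbb{Q}(\zeta_3)$ ramify in $\mathrm{k}/\mathrm{k}_0$. Since $\mathrm{k} = \mathrm{k}_0(\sqrt[3]{p})$ is a Kummer extension of degree $3$, the ramified primes are those dividing $p$ together with possibly the prime $\lambda = (1-\zeta_3)$ above $3$. Using the factorization rules listed before Proposition~\ref{3K}: for $p \equiv 1 \pmod 3$ we have $p\mathcal{O}_{\mathrm{k}_0} = \pi_1\pi_2$, and I would use Proposition~\ref{3K} (or the underlying ramification of $3$ in $\Gamma$) to see how $\lambda$ behaves. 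Because $p \equiv 1 \pmod 3$ (indeed the paper's running hypothesis is $p\equiv 1 \pmod 9$, making $\Gamma$ of the first kind, so $3\mathcal{O}_{\mathrm{k}} = \mathfrak{P}^6$), both $\pi_1$ and $\pi_2$ ramify in $\mathrm{k}/\mathrm{k}_0$, and $\lambda$ also ramifies. This should give $t = 3$.

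**Step 2: Determine $q^*$.** The harder part will be pinning down $q^*$, i.e.\ deciding whether $\zeta_3$ is a norm from $\mathrm{k}^\times$ to $\mathrm{k}_0^\times$. The plan is to use Hasse's norm theorem for the cyclic extension $\mathrm{k}/\mathrm{k}_0$: $\zeta_3$ is a global norm if and only if it is a local norm at every place, which by the product formula and property~(7) of the norm residue symbol reduces to checking the symbols $\left(\frac{\zeta_3, p}{\mathcal{P}}\right)_3$ at the ramified primes $\pi_1, \pi_2, \lambda$. I would argue that $\zeta_3$ fails to be a norm (so $q^* = 0$), consistent with $t=3$ yielding $|C_{\mathrm{k},3}^{(\sigma)}| = 3^{3-2+0} = 3$. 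The obstacle here is the careful local computation of the cubic norm residue symbol at $\lambda$, where the wild ramification of $3$ makes the symbol delicate; I would lean on the product formula $\prod_{\mathcal{P}}\left(\frac{\zeta_3,p}{\mathcal{P}}\right)_3 = 1$ to trade the awkward computation at $\lambda$ for the more tractable ones at $\pi_1,\pi_2$, and invoke the reciprocity law (property~(6)) to evaluate those.

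**Assembling the result.** Having established $t = 3$ and $q^* = 0$, I would conclude directly that $|C_{\mathrm{k},3}^{(\sigma)}| = 3^{t-2+q^*} = 3^{1} = 3$, which is the assertion. The main risk is that the value of $q^*$ depends on a subtle norm condition; if instead one finds $q^*=1$ one would need $t=2$, so the two computations must be made consistent. I expect the clean outcome $t=3$, $q^*=0$ given the running assumption $p\equiv 1\pmod 9$, and the decisive ingredient is the genus-theoretic order formula combined with Hasse's norm principle for the cyclic extension $\mathrm{k}/\mathrm{k}_0$.
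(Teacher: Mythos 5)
Your overall strategy coincides with the paper's: invoke the ambiguous class number formula $|C_{\mathrm{k},3}^{(\sigma)}| = 3^{t-2+q^*}$ from \cite{GERTH1} and compute $t$ and $q^*$. But your execution contains a concrete error, and it occurs precisely in the case the rest of the paper uses. When $p \equiv 1 \pmod 9$, the field $\Gamma = \mathbb{Q}(\sqrt[3]{p})$ is of the \emph{second} kind, not the first: with $a = p$, $b = 1$ one has $a^2 - b^2 = p^2 - 1 \equiv 0 \pmod 9$, so by Definition \ref{3inG} and Propositions \ref{3GAMMA} and \ref{3K} the prime $3$ factors as $3\mathcal{O}_{\mathrm{k}} = \mathfrak{P}_1^2\mathfrak{P}_2^2\mathfrak{P}_3^2$, not as $\mathfrak{P}^6$. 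The ramification index $2$ is entirely accounted for by $\mathrm{k}_0/\mathbb{Q}$, so $\lambda = 1-\zeta_3$ is \emph{unramified} in $\mathrm{k}/\mathrm{k}_0$ and $t = 2$, not $3$. Correspondingly, your anticipated $q^* = 0$ is also wrong in this case: $p \equiv 1 \pmod 9$ forces $\pi_1 \equiv \pi_2 \equiv 1 \pmod{(1-\zeta_3)^3}$, hence the symbols $\left(\frac{\zeta_3,p}{\pi_i}\right)_3$ are trivial and $\zeta_3$ \emph{is} a norm from $\mathrm{k}$, i.e.\ $q^* = 1$; this is exactly how the paper argues, obtaining $3^{2-2+1} = 3$. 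Your two errors cancel numerically ($3-2+0 = 2-2+1$), but they are not mutually consistent: had you actually carried out your Step 2 via the product formula and reciprocity, you would have found $q^* = 1$, which combined with your claimed $t = 3$ yields $3^{3-2+1} = 9$, contradicting the lemma; conversely the true $t=2$ with your claimed $q^*=0$ yields $3^0 = 1$. Either way the proof as planned collapses.

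A second omission: the lemma is stated for all $p \equiv 1 \pmod 3$, so a complete proof must treat the two congruence classes $p \equiv 4, 7 \pmod 9$ and $p \equiv 1 \pmod 9$ separately, as the paper does. Your expected combination $t = 3$, $q^* = 0$ is in fact the correct one for $p \equiv 4, 7 \pmod 9$: there $\lambda$ does ramify, and $\pi_i \not\equiv 1 \pmod{(1-\zeta_3)^3}$ makes the Hilbert symbol $\left(\frac{\zeta_3,p}{p}\right)_3$ nontrivial, so $\zeta_3$ is not a norm. But for $p \equiv 1 \pmod 9$ the correct combination is $t = 2$, $q^* = 1$. The two cases cannot be merged into one uniform computation, and the hypothesis $p \equiv 1 \pmod 9$ that you cite as the paper's running assumption is exactly the case in which both your kind-classification and your norm claim fail.
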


\begin{proof}
Since $p\equiv 1  \pmod  3$, then according to section \ref{subfactor}, we have $p=\pi_1\pi_2$, where $\pi_1$ and
$\pi_2$ are two primes of $\mathrm{k}_{0}$ such that
$\pi_2=\pi_1^{\tau}$ and $\pi_1 \equiv \pi_2 \equiv 1 ~(\bmod~3\mathcal{O}_{\mathrm{k}_0})$. We study all cases depending on the congruence class of $p$ modulo $9$, then:
\begin{itemize}
\item If $p \equiv 4\ \text{or} \ 7  \pmod  9$, then according to section \ref{subfactor}, the prime $3$ is  ramified in the field $L$, so the prime ideal $(1-\zeta_3)$ is  ramified in $\mathrm{k}/\mathrm{k}_0$.
Also $\pi_1$ and $\pi_2$ are totally ramified in $\mathrm{k}$. So $t=3$. In addition, the fact that $p \equiv 4\ \text{or} \ 7  \pmod  9$  imply that $\pi_{i}\not\equiv 1  \pmod { (1-\zeta_3)^3 }$ for $i= \lbrace 1, 2\rbrace$, then according to section $5$ of
\cite{GERTH1} we obtain
    $$\left(\frac{\zeta_3,p}{p} \right)_3 \neq 1$$
where the symbol $(\dfrac{ \ , \ }{})_3$ is the 
cubic Hilbert symbol. We deduce that $\zeta_3$ is not a norm in the
extension $\mathrm{k} / \mathrm{k}_{0} $, so $q^*=0$. Hence
$\operatorname{rank}\,   C_{\mathrm{k},3}^{(\sigma)}=1$ and then $\vert
C_{\mathrm{k},3}^{(\sigma)}\vert= 3 $.
\item If $p \equiv 1  \pmod  9$, the prime ideals which ramified in $\mathrm{k}
/ \mathrm{k}_0$ are $\pi_{1}$ and  $\pi_{2}$, so $t=2$. Moreover, $\pi_{1} \equiv \pi_2 \equiv 1  \pmod  {(1-\zeta_3)^3} $, then according to \cite{GERTH1}, the cubic Hilbert symbol: 
$$\left(\frac{\zeta_3,p}{ \pi_1}\right)_{3}=\left(\frac{\zeta_3,p}{ \pi_2}\right)_{3}=1,$$
 We conclude that $\zeta_3$ is a norm in the
extension $\mathrm{k} / \mathrm{k}_{0} $, then $q^*=1$, so $\operatorname{rank}\,   
C_{\mathrm{k},3}^{(\sigma)}=1$ and  $\vert
C_{\mathrm{k},3}^{(\sigma)}\vert= 3 $.
\end{itemize}

\end{proof}


The basic result for determining the generators of the $3$-class group of $\mathrm{k}=\mathbb{Q}(\sqrt[3]{p},\zeta_3)$  when the $3$-class number of $\mathrm{k}$ is divisible by $27$ exactly, where $p$ is a prime number such that $p \equiv 1 ~(mod~9)$,  is summarized in the following Theorem:

\begin{theorem}\label{Cara39}
Let $\Gamma= \mathbb{Q}(\sqrt[3]{p})$, where $p$ is a prime number such that $p \equiv 1 ~(mod~9)$, $\mathrm{k}=\mathbb{Q}(\sqrt[3]{p},\zeta_3)$ its normal closure and $C_{\mathrm{k},3}$ the $3$-class group of  $\mathrm{k}$. Assuming $9$ divides the $3$-class number of $\Gamma$ exactly, then:\\
The $3$-class group $C_{\mathrm{k},3}$ is isomorphic to $ \mathbb{Z}/9\mathbb{Z}\times
\mathbb{Z}/3\mathbb{Z}$  if and only if $u=1$.
\end{theorem}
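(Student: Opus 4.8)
The plan is to deduce the statement from two ingredients already in play: the class-number formula used inside the proof of Proposition \ref{39}, and the rank-two theorem of Calegari--Emerton and Gerth recalled in the introduction. The whole point is that the hypotheses ``$p\equiv 1\pmod 9$'' and ``$9$ divides $h_{\Gamma,3}$ exactly'' (so $h_{\Gamma,3}=9$) pin down the \emph{order} of $C_{\mathrm{k},3}$ up to the unit index, while the rank result pins down the \emph{structure} once the order is known, so that no residue-symbol computation is needed.

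First I would record the order. By Theorem $14.1$ of \cite{B-C} one has $h_{\mathrm{k},3}=\tfrac{u}{3}\,h_{\Gamma,3}^{2}$, and since $h_{\Gamma,3}=9$ this gives $h_{\mathrm{k},3}=27u$. Because $u=[E_{\mathrm{k}}:E_0]\in\{1,3\}$ for the normal closure of a pure cubic field, we get $|C_{\mathrm{k},3}|=27$ when $u=1$ and $|C_{\mathrm{k},3}|=81$ when $u=3$. Next I would invoke the rank: as $9\mid h_{\Gamma,3}$ and $p\equiv 1\pmod 9$, the Calegari--Emerton/Gerth theorem forces $\operatorname{rank}C_{\mathrm{k},3}=2$. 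With these two facts the converse is immediate, namely if $C_{\mathrm{k},3}\simeq\mathbb{Z}/9\mathbb{Z}\times\mathbb{Z}/3\mathbb{Z}$ then $|C_{\mathrm{k},3}|=27=27u$, so $u=1$; this direction uses only the class-number formula. For the forward direction, if $u=1$ then $|C_{\mathrm{k},3}|=27$ and $\operatorname{rank}C_{\mathrm{k},3}=2$, and the only abelian $3$-group of order $27$ and rank $2$ is $\mathbb{Z}/9\mathbb{Z}\times\mathbb{Z}/3\mathbb{Z}$ (the partition $3=2+1$), so the structure is forced.

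Alternatively, I could route the argument entirely through Proposition \ref{39}, which is the cleaner way to expose what the extra hypothesis buys. The only gap between the present theorem and that proposition is that ``$9\,\|\,h_{\Gamma,3}$'' leaves two a priori possibilities, $C_{\Gamma,3}\simeq\mathbb{Z}/9\mathbb{Z}$ or $C_{\Gamma,3}\simeq(\mathbb{Z}/3\mathbb{Z})^{2}$. Assuming $u=1$, if $C_{\Gamma,3}$ were of type $(3,3)$ then Proposition \ref{39}(2) would give $C_{\mathrm{k},3}\simeq(\mathbb{Z}/3\mathbb{Z})^{3}$, of rank $3$, contradicting $\operatorname{rank}C_{\mathrm{k},3}=2$; hence $C_{\Gamma,3}\simeq\mathbb{Z}/9\mathbb{Z}$ and Proposition \ref{39}(1) yields the desired isomorphism. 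The converse is again read off directly from Proposition \ref{39}(1).

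The hard part here is not any single calculation but keeping the external inputs honestly in force. I would need to confirm that the unit index indeed satisfies $u\in\{1,3\}$ for $\mathrm{k}$, so that $27u$ cannot accidentally match a different structure, and—more delicately—that the rank-two theorem genuinely applies, i.e.\ that ``$9$ divides $h_{\Gamma,3}$ exactly'' supplies the divisibility $9\mid h_{\Gamma,3}$ that Calegari--Emerton/Gerth require, with $p\equiv 1\pmod 9$. Once the order is fixed as $27u$ and the rank is fixed at two, the classification of finite abelian $3$-groups closes both implications.
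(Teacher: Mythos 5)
Your proposal is correct and follows essentially the same route as the paper: both directions rest on the Barrucand--Cohn formula $h_{\mathrm{k},3}=\tfrac{u}{3}h_{\Gamma,3}^{2}$ (which, with $h_{\Gamma,3}=9$ and $u\in\{1,3\}$, fixes the order) together with the Calegari--Emerton/Gerth rank-two theorem, which is exactly how the paper argues (its forward direction merely cites Proposition \ref{39}, whose own proof is the same class-number-formula computation you give). Your alternative route through Proposition \ref{39}, ruling out $C_{\Gamma,3}\simeq(\mathbb{Z}/3\mathbb{Z})^{2}$ by the rank, is a harmless repackaging of the same ingredients.
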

\begin{proof}
\begin{itemize}
\item[$\Rightarrow)$] 
 By Proposition \ref{39}, it is clear that if $C_{\mathrm{k},3}$ is isomorphic to $ \mathbb{Z}/9\mathbb{Z}\times
\mathbb{Z}/3\mathbb{Z}$ then $u=1$.
\item[$\Leftarrow)$] Assume that $u=1$, then according to Theorem $14.1$ of \cite{B-C}, $h_{\mathrm{k},3}=27$. Since $9$ divides the $3$-class number of $\Gamma$, then by Lemma $5.11$ of \cite{Cal-Emer} we have $\operatorname{rank}\, C_{\mathrm{k},3}=2$. Hence $C_{\mathrm{k},3} \simeq \mathbb{Z}/9\mathbb{Z}\times
\mathbb{Z}/3\mathbb{Z}$.
\end{itemize}
\end{proof}


\begin{proposition}
\label{prop:carrr39}
Let $\Gamma= \mathbb{Q}(\sqrt[3]{p})$, where $p$ is a prime number such that $p \equiv 1 ~(mod~9)$, $\mathrm{k}=\mathbb{Q}(\sqrt[3]{p},\zeta_3)$ it's normal closure, and $C_{\mathrm{k},3}$ be the $3$-class group of $\mathrm{k}$. 
  Assume that $9$ divides the $3$-class number of $\Gamma$ exactly and $u=1$. Put $\langle \mathcal{A} \rangle=C_{\mathrm{k},3}^{+}$, where $\mathcal{A} \in C_{\mathrm{k},3}$ such that $\mathcal{A}^{9}=1$
 and $\mathcal{A}^{3} \neq 1$. Let  $C_{\mathrm{k},3}^{(\sigma)}$ be the $3$-group of ambiguous ideal classes of $\mathrm{k}/\mathrm{k}_0$ and   \(C_{\mathrm{k},3}^{1-\sigma}=\{ \mathcal{A}^{1-\sigma}\, \vert\,
\mathcal{A} \in C_{\mathrm{k},3} \}\) be the principal genus of $C_{\mathrm{k},3}$. Then:
 \begin{enumerate}
\item \(C_{\mathrm{k},3}^{(\sigma)}\) is a subgroup of \(C_{\mathrm{k},3}^{+}\),  $\mathcal{A} \not\in C_{\mathrm{k},3}^{(\sigma)} $ and
\(C_{\mathrm{k},3}^{(\sigma)}=\langle\mathcal{A}^{3}\rangle=\langle\mathcal{B}^{1-\sigma}\rangle\),
where $ \mathcal{B} \in C_{\mathrm{k},3}$ such that \(C_{\mathrm{k},3}^{-}=\langle\mathcal{B}\rangle\). 
 \item \(C_{\mathrm{k},3}^{-}=\langle(\mathcal{A}^{2})^{\sigma-1}\rangle\), and we have
\(C_{\mathrm{k},3}^{1-\sigma}= C_{\mathrm{k},3}^{-}\times
C_{\mathrm{k},3}^{(\sigma)} \) is a $3$-group of type \((3,3)\),
  \end{enumerate}
  where $C_{\mathrm{k},3}^{+}$ and $C_{\mathrm{k},3}^{-}$  are defined in Lemma $2.1$  of \cite{GERTH2}.
\end{proposition}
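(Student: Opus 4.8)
The whole proposition is driven by one observation about the Galois action. Since $h_{\mathrm{k}_0,3}=1$, the relative norm $\mathrm{N}_{\mathrm{k}/\mathrm{k}_0}$ annihilates $C_{\mathrm{k},3}$: for every class $\mathcal{C}$ the class $\mathcal{C}^{1+\sigma+\sigma^2}$ is the extension to $\mathrm{k}$ of $\mathrm{N}_{\mathrm{k}/\mathrm{k}_0}(\mathcal{C})\in C_{\mathrm{k}_0,3}=\{1\}$, so $1+\sigma+\sigma^2$ acts trivially. Expanding $(1-\sigma)^2=(1+\sigma+\sigma^2)-3\sigma$, this yields the identity
$$\mathcal{C}^{(1-\sigma)^2}=\mathcal{C}^{-3\sigma}=\bigl(\mathcal{C}^{\sigma}\bigr)^{-3}\qquad(\mathcal{C}\in C_{\mathrm{k},3}),$$
so $1-\sigma$ behaves on $C_{\mathrm{k},3}$ exactly like the prime $\lambda=1-\zeta_3$, with $(1-\sigma)^2$ differing from the cube-power map only by the automorphism $\sigma$. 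From this I read off two facts: the image of $(1-\sigma)^2$ is the subgroup of cubes, which equals $\langle\mathcal{A}^3\rangle$ since $\mathcal{B}^3=1$; and $\sigma$ fixes this order-$3$ group pointwise, because $\operatorname{Aut}(\mathbb{Z}/3\mathbb{Z})$ has order prime to $3$.

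For part 1 these facts combine with Lemma \ref{11}. By that lemma $\vert C_{\mathrm{k},3}^{(\sigma)}\vert=3$; since $\langle\mathcal{A}^3\rangle$ is an order-$3$ subgroup contained in $\ker(1-\sigma)=C_{\mathrm{k},3}^{(\sigma)}$, equality of orders forces $C_{\mathrm{k},3}^{(\sigma)}=\langle\mathcal{A}^3\rangle$, which lies inside $C_{\mathrm{k},3}^{+}=\langle\mathcal{A}\rangle$, and $\mathcal{A}\notin C_{\mathrm{k},3}^{(\sigma)}$ because $\mathcal{A}$ has order $9$. For the description through $\mathcal{B}$, note that $C_{\mathrm{k},3}^{-}=\langle\mathcal{B}\rangle$ has order $3$, so the identity gives $\mathcal{B}^{(1-\sigma)^2}=(\mathcal{B}^{\sigma})^{-3}=1$, i.e. $\mathcal{B}^{1-\sigma}\in\ker(1-\sigma)=C_{\mathrm{k},3}^{(\sigma)}$; and $\mathcal{B}^{1-\sigma}\neq1$ because $\mathcal{B}\notin C_{\mathrm{k},3}^{(\sigma)}$ (the latter sits in $C_{\mathrm{k},3}^{+}$, which meets $C_{\mathrm{k},3}^{-}$ trivially). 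As $C_{\mathrm{k},3}^{(\sigma)}$ has order $3$, $\mathcal{B}^{1-\sigma}$ generates it, giving $C_{\mathrm{k},3}^{(\sigma)}=\langle\mathcal{A}^3\rangle=\langle\mathcal{B}^{1-\sigma}\rangle$.

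For part 2 I would first pin down the principal genus. The exact sequence recalled before the proposition gives $\vert C_{\mathrm{k},3}/C_{\mathrm{k},3}^{1-\sigma}\vert=\vert C_{\mathrm{k},3}^{(\sigma)}\vert=3$, hence $\vert C_{\mathrm{k},3}^{1-\sigma}\vert=9$. Now $C_{\mathrm{k},3}^{1-\sigma}$ is generated by $\mathcal{A}^{1-\sigma}$ and $\mathcal{B}^{1-\sigma}$, and both have order dividing $3$: indeed $(\mathcal{A}^{1-\sigma})^3=(\mathcal{A}^3)^{1-\sigma}=1$ since $\mathcal{A}^3\in C_{\mathrm{k},3}^{(\sigma)}$, and likewise for $\mathcal{B}$. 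Thus $C_{\mathrm{k},3}^{1-\sigma}$ is elementary abelian of order $9$, i.e. of type $(3,3)$. To realise it as $C_{\mathrm{k},3}^{-}\times C_{\mathrm{k},3}^{(\sigma)}$ I would use $(\mathcal{A}^2)^{\sigma-1}$ as the second generator and check independence from $C_{\mathrm{k},3}^{(\sigma)}$: applying $1-\tau$ and commuting $\tau$ past $\sigma$ through $\tau\sigma\tau=\sigma^{-1}$, together with $\tau$ fixing $\mathcal{A}^2\in C_{\mathrm{k},3}^{+}$, gives $(1-\tau)(\mathcal{A}^2)^{\sigma-1}=(\mathcal{A}^2)^{\sigma-\sigma^{-1}}$, which is nontrivial since $\mathcal{A}^2\notin C_{\mathrm{k},3}^{(\sigma)}$. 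Hence $(\mathcal{A}^2)^{\sigma-1}\notin C_{\mathrm{k},3}^{+}\supseteq C_{\mathrm{k},3}^{(\sigma)}$; as both subgroups have order $3$ and intersect trivially, their product is all of $C_{\mathrm{k},3}^{1-\sigma}$.

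The step I expect to be the genuine obstacle is the remaining assertion of part 2, namely that $(\mathcal{A}^2)^{\sigma-1}$ generates precisely the minus part $C_{\mathrm{k},3}^{-}$ of Lemma $2.1$ of \cite{GERTH2}, rather than merely an order-$3$ complement of $C_{\mathrm{k},3}^{(\sigma)}$ in the genus. This is the one place where the $\sigma$-action must be tied to the $\tau$-eigenspace splitting $C_{\mathrm{k},3}=C_{\mathrm{k},3}^{+}\times C_{\mathrm{k},3}^{-}$. I would test membership by evaluating $(1+\tau)(\mathcal{A}^2)^{\sigma-1}$: commuting $\tau$ past $\sigma$ via $\tau\sigma\tau=\sigma^{-1}$ and using that $\tau$ fixes $\mathcal{A}^2$ collapses the exponent to $\sigma+\sigma^{-1}-2$, which the relation $1+\sigma+\sigma^2=0$ reduces to $-3$, so that $(1+\tau)(\mathcal{A}^2)^{\sigma-1}=(\mathcal{A}^2)^{-3}=\mathcal{A}^{3}$. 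Interpreting this class correctly against the defining property of $C_{\mathrm{k},3}^{-}=\ker(1+\tau)$ is the crux, and I would carry out this conjugation-and-collapse calculation with care and match it against Gerth's definition; once the membership and generation of $C_{\mathrm{k},3}^{-}$ are settled, the decomposition $C_{\mathrm{k},3}^{1-\sigma}=C_{\mathrm{k},3}^{-}\times C_{\mathrm{k},3}^{(\sigma)}$ and all the rest follow formally from the single identity $(1-\sigma)^2=-3\sigma$ and the order count of Lemma \ref{11}.
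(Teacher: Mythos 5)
Your proof of part 1 is correct, and it takes a cleaner route than the paper's: where the paper invokes the case analysis of \cite{GERTH3} (the invariant $s=3$, Case 4) to force $C_{\mathrm{k},3}^{(\sigma)}\subseteq C_{\mathrm{k},3}^{+}$, you derive everything from the relation $1+\sigma+\sigma^{2}=0$ on $C_{\mathrm{k},3}$ (valid since $h_{\mathrm{k}_0}=1$) together with Lemma \ref{11}; likewise your one-line verification that $\mathcal{B}^{1-\sigma}$ is ambiguous replaces the paper's explicit manipulation of the identity $\mathcal{B}^{\sigma^{2}}=\mathcal{B}^{2+2\sigma}$. Your argument that $C_{\mathrm{k},3}^{1-\sigma}$ has order $9$ and exponent $3$, and that it splits as $\langle(\mathcal{A}^{2})^{\sigma-1}\rangle\times C_{\mathrm{k},3}^{(\sigma)}$, is also sound.

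The step you flag as the crux is a genuine gap, but you should know it cannot be closed, because your own computation refutes the assertion. You correctly obtain $\bigl((\mathcal{A}^{2})^{\sigma-1}\bigr)^{1+\tau}=\mathcal{A}^{3}$, using only $\tau\sigma\tau=\sigma^{-1}$, $\mathcal{A}^{\tau}=\mathcal{A}$ and $1+\sigma+\sigma^{2}=0$. Since Gerth's minus part is $C_{\mathrm{k},3}^{-}=\{\chi\,:\,\chi^{\tau}=\chi^{-1}\}=\ker(1+\tau)$, this identity says outright that $(\mathcal{A}^{2})^{\sigma-1}\notin C_{\mathrm{k},3}^{-}$. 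Concretely, writing $y=(\mathcal{A}^{2})^{\sigma-1}=y^{+}y^{-}$ along $C_{\mathrm{k},3}=C_{\mathrm{k},3}^{+}\times C_{\mathrm{k},3}^{-}$, one gets $y^{1+\tau}=(y^{+})^{2}=\mathcal{A}^{3}$, hence $y^{+}=\mathcal{A}^{6}\neq 1$. So $\langle(\mathcal{A}^{2})^{\sigma-1}\rangle$ is an order-$3$ complement of $C_{\mathrm{k},3}^{(\sigma)}$ inside the principal genus whose image modulo $C_{\mathrm{k},3}^{(\sigma)}$ agrees with that of $C_{\mathrm{k},3}^{-}$, but it is never equal to $C_{\mathrm{k},3}^{-}$: the first assertion of part 2 is false as stated, and no reinterpretation of the conjugation-and-collapse computation rescues it. Nor are you missing an argument that exists in the paper: its proof of this step (``since $\mathcal{A}^{2}\notin C_{\mathrm{k},3}^{-}$ we deduce $C_{\mathrm{k},3}^{-}=\langle(\mathcal{A}^{2})^{1-\sigma}\rangle$'') is a non sequitur.

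What is true, and what your proposal does not yet establish, is the stated decomposition $C_{\mathrm{k},3}^{1-\sigma}=C_{\mathrm{k},3}^{(\sigma)}\times C_{\mathrm{k},3}^{-}$, which requires $C_{\mathrm{k},3}^{-}\subseteq C_{\mathrm{k},3}^{1-\sigma}$ -- a point you never address. It can be supplied quickly: the principal genus is $\tau$-stable, since $\tau(x^{1-\sigma})=(x^{\tau})^{1-\sigma^{-1}}=\bigl((x^{\tau})^{1+\sigma}\bigr)^{1-\sigma}$, so it decomposes into its own plus and minus parts; its plus part has exponent $3$ and lies in the cyclic group $C_{\mathrm{k},3}^{+}\simeq\mathbb{Z}/9\mathbb{Z}$, hence equals $\langle\mathcal{A}^{3}\rangle$ of order $3$; as the genus has order $9$, its minus part has order $3$ and is therefore all of $C_{\mathrm{k},3}^{-}$. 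The salvageable form of part 2 is thus: $C_{\mathrm{k},3}^{1-\sigma}=C_{\mathrm{k},3}^{(\sigma)}\times C_{\mathrm{k},3}^{-}$ is of type $(3,3)$, and $(\mathcal{A}^{2})^{\sigma-1}$ generates a complement of $C_{\mathrm{k},3}^{(\sigma)}$ in it, but not the minus part itself.
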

\begin{proof}
\begin{enumerate}
\item Since $9$ divides the $3$-class number of $\Gamma$ exactly and $u=1$, then according to Theorem \ref{Cara39}, \(C_{\mathrm{k},3}\) is of
type \((9,3)\), this implies by \cite{GERTH3} that the integer $s$ defined above is equal \(3\), and according to Case $4$ of \cite{GERTH3}, we conclude
that$\vert \left(C_{\mathrm{k},3}^{(\sigma)}
  \right)^{+}\vert =3$ and $\vert \left(C_{\mathrm{k},3}^{(\sigma)}
  \right)^{-}\vert= 1$,
  this implies that \(C_{\mathrm{k},3}^{(\sigma)}\) is a subgroup of
  \(C_{\mathrm{k},3}^{+}\). Therefore, \(\langle\mathcal{A}^{3}\rangle\) is the unique
  subgroup of order \(3\) of   \(C_{\mathrm{k},3}^{+}\) and \(C_{\mathrm{k},3}^{(\sigma)}\) is cyclic
  of order \(3\), then  \(C_{\mathrm{k},3}^{(\sigma)}=\langle\mathcal{A}^{3}\rangle\).\\ 
 Moreover, if
\(C_{\mathrm{k},3}^{-}=\langle\mathcal{B}\rangle\) where $ \mathcal{B} \in C_{\mathrm{k},3}$, then \(\mathcal{B} \not\in
C_{\mathrm{k},3}^{(\sigma)} \), so $\mathcal{B}^{\sigma} \neq \mathcal{B}$. Furthermore, $\mathcal{B}^{\sigma} \neq \mathcal{B}^2$ because otherwise we will have $\mathcal{B}^{\sigma^2}=(\mathcal{B}^2)^{\sigma}=(\mathcal{B}^{\sigma})^2=\mathcal{B}^{4}$, as $ \mathcal{B} \in C_{\mathrm{k},3}^{-}$, then $\mathcal{B}^3=1 $. Therefore, $\mathcal{B}^{\sigma^2}=\mathcal{B}$, so $\mathcal{B}^{\sigma^3}=\mathcal{B}^{\sigma}$, since $\sigma^3=1$, then $\mathcal{B}^{\sigma}=\mathcal{B}$. This is impossible because $\mathcal{B}^{\sigma}\neq \mathcal{B}$. As $\mathcal{B}^{3}=1$ and $\mathcal{B}^{1+\sigma+\sigma^2}=1$, then $\mathcal{B}^{\sigma^2}=\mathcal{B}^{2+2\sigma}$.  This equality makes it possible to show that $\mathcal{B}^{1-\sigma}$ is an ambiguous class. We conclude that
\(C_{\mathrm{k},3}^{(\sigma)}=\langle \mathcal{B}^{1-\sigma}\rangle\). 
\item We reason as in the assertion \(1\).
Since \(\mathcal{A}^{2}\not\in C_{\mathrm{k},3}^{-} \), we deduce that \(C_{\mathrm{k},3}^{-}=\langle(\mathcal{A}^{2})^{1-\sigma}\rangle\). then
\(C_{\mathrm{k},3}^{-}\) and \(C_{\mathrm{k},3}^{(\sigma)}\) are
contained in \(C_{\mathrm{k},3}^{1-\sigma}\) which is of order \(9\),
because \( | C_{\mathrm{k},3}|=27 \) and
\( | C_{\mathrm{k},3}^{(\sigma)}|=3\). Consequently,
\[C_{\mathrm{k},3}^{1-\sigma}=
C_{\mathrm{k},3}^{(\sigma)}\times
C_{\mathrm{k},3}^{-}=\langle\mathcal{A}^{3},\mathcal{B}\rangle.\]
\end{enumerate}
\end{proof}

Our principal result can be stated as follows:

\begin{theorem}
\label{thm:main} Let $\mathrm{k}=\mathbb{Q}(\sqrt[3]{ p}, \zeta_3 )$, where $p$ is a prime number such that $p \equiv 1  \pmod  9$. The prime $ 3$  decomposes in $ \emph{k} $ as $3\mathcal{O}_{\mathrm{k}}=
\mathcal{P}^{2}\mathcal{Q}^{2}\mathcal{R}^{2}$, where $\mathcal{P},
\,\,\mathcal{Q}$ and $\mathcal{R}$ are prime ideals of
$\mathrm{k}$ . Put \(h=\frac{h_{\mathrm{k}}}{27}\),
where \(h_{\mathrm{k}}\) is the class number of \(\mathrm{k}\).
Assume that $9$ divides exactly the $3$-class number of $\mathbb{Q}(\sqrt[3]{ p})$ and $u=1$. If $3$ is not a cubic residue  modulo $p$, then:
\begin{enumerate}
\item The class \([\mathcal{R}^{h}]\) generate $C_{\mathrm{k},3}^{+}$;
\item The \(3\)-class group \(C_{\mathrm{k},3}\)  is generated by classes \([\mathcal{R}^{h}]\) and
\([\mathcal{R}^{h}][\mathcal{P}^{h}]^2\), and we have:
\[C_{\mathrm{k},3}=\langle[\mathcal{R}^{h}]\rangle\times\langle[\mathcal{R}^{h}][\mathcal{P}^{h}]^2\rangle=\langle
[\mathcal{R}^{h}],[\mathcal{P}^{h}]^2\rangle.\]
\end{enumerate}
\end{theorem}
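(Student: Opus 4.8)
The plan is to pin down the two classes inside the structure $C_{\mathrm{k},3}=C_{\mathrm{k},3}^{+}\times C_{\mathrm{k},3}^{-}$ already available from the hypotheses: under the standing assumptions ($9\,\Vert\,h_{\Gamma,3}$ and $u=1$), Theorem \ref{Cara39} gives $C_{\mathrm{k},3}\simeq\mathbb{Z}/9\mathbb{Z}\times\mathbb{Z}/3\mathbb{Z}$, and Propositions \ref{39} and \ref{prop:carrr39} tell us that $C_{\mathrm{k},3}^{+}$ is cyclic of order $9$, that $C_{\mathrm{k},3}^{-}$ is cyclic of order $3$, and that the unique subgroup of order $3$ of $C_{\mathrm{k},3}^{+}$ is the ambiguous class group $C_{\mathrm{k},3}^{(\sigma)}$. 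The strategy is to place $[\mathcal{R}^{h}]$ and $[\mathcal{R}^{h}][\mathcal{P}^{h}]^{2}$ in the two $\tau$-eigenspaces and then prove each has maximal order in its component.

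First I would record the Galois action on the primes above $3$. Since $3\mathcal{O}_{\mathrm{k}}=\mathcal{P}^{2}\mathcal{Q}^{2}\mathcal{R}^{2}$ by Proposition \ref{3K} while $3\mathcal{O}_{\mathrm{k}_0}=\lambda^{2}$, the prime $\lambda$ splits completely in $\mathrm{k}/\mathrm{k}_0$ as $\lambda\mathcal{O}_{\mathrm{k}}=\mathcal{P}\mathcal{Q}\mathcal{R}$, so $\sigma$ permutes $\mathcal{P},\mathcal{Q},\mathcal{R}$ cyclically; since $\operatorname{Gal}(\mathrm{k}/\mathbb{Q})\simeq S_3$ and each such prime has decomposition group of order $2$, the involution $\tau$ fixes exactly one of them, say $\mathcal{R}$, and interchanges $\mathcal{P}$ and $\mathcal{Q}$. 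I relabel so that $\sigma:\mathcal{P}\mapsto\mathcal{Q}\mapsto\mathcal{R}\mapsto\mathcal{P}$. As $h=h_{\mathrm{k}}/27$ is prime to $3$, raising to the $h$-th power sends every ideal class into $C_{\mathrm{k},3}$ and is an automorphism there, and the principality of $(\lambda)=\mathcal{P}\mathcal{Q}\mathcal{R}$ yields the key relation $[\mathcal{P}^{h}][\mathcal{Q}^{h}][\mathcal{R}^{h}]=1$. From $\tau(\mathcal{R})=\mathcal{R}$ it follows that $[\mathcal{R}^{h}]\in C_{\mathrm{k},3}^{+}$, and combining the relation with $\tau(\mathcal{P})=\mathcal{Q}$ shows that $\beta:=[\mathcal{R}^{h}][\mathcal{P}^{h}]^{2}=[\mathcal{P}^{h}][\mathcal{Q}^{h}]^{-1}=[\mathcal{P}^{h}]^{1-\sigma}$ satisfies $\beta^{\tau}=\beta^{-1}$, i.e.\ $\beta\in C_{\mathrm{k},3}^{-}$.

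It then remains to show each class generates its eigencomponent, which reduces to two non-triviality statements. Because $C_{\mathrm{k},3}^{+}$ is cyclic of order $9$ with unique order-$3$ subgroup $C_{\mathrm{k},3}^{(\sigma)}$, the class $[\mathcal{R}^{h}]$ generates $C_{\mathrm{k},3}^{+}$ if and only if $[\mathcal{R}^{h}]\notin C_{\mathrm{k},3}^{(\sigma)}$; and since the kernel of $1-\sigma$ is the ambiguous class group, $\beta=[\mathcal{P}^{h}]^{1-\sigma}$ generates $C_{\mathrm{k},3}^{-}$ if and only if $\beta\neq 1$, i.e.\ if and only if $[\mathcal{P}^{h}]\notin C_{\mathrm{k},3}^{(\sigma)}$. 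Both are non-ambiguity assertions about the classes of the primes over $3$, and this is exactly where the hypothesis that $3$ is not a cubic residue modulo $p$ enters. I would translate non-ambiguity into the non-vanishing of a cubic norm residue symbol: using the properties of Section \ref{sec2} together with cubic reciprocity, the Hilbert symbol attached to $3$ and $p$ at the primes $\pi_1,\pi_2$ above $p$ equals $1$ precisely when $3^{(p-1)/3}\equiv 1\ (\bmod\ p)$, so the assumption $\left(\frac{3}{p}\right)_3\neq 1$ forces it to be a non-trivial cube root of unity, which prevents $[\mathcal{R}^{h}]$ and $[\mathcal{P}^{h}]$ from being ambiguous. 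This symbol computation, which links the class of a prime over $3$ to the cubic residue character of $3$ modulo $p$ through reciprocity, is the main obstacle and the technical heart of the argument.

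Finally, granting the two orders, $C_{\mathrm{k},3}^{+}=\langle[\mathcal{R}^{h}]\rangle$ has order $9$ and $C_{\mathrm{k},3}^{-}=\langle\beta\rangle$ has order $3$; since $C_{\mathrm{k},3}^{+}\cap C_{\mathrm{k},3}^{-}=\{1\}$ the product is direct and exhausts $C_{\mathrm{k},3}$, giving the first assertion and $C_{\mathrm{k},3}=\langle[\mathcal{R}^{h}]\rangle\times\langle[\mathcal{R}^{h}][\mathcal{P}^{h}]^{2}\rangle$. The identity $\langle[\mathcal{R}^{h}]\rangle\times\langle\beta\rangle=\langle[\mathcal{R}^{h}],[\mathcal{P}^{h}]^{2}\rangle$ is then immediate from $\beta=[\mathcal{R}^{h}][\mathcal{P}^{h}]^{2}$ and $[\mathcal{P}^{h}]^{2}=[\mathcal{R}^{h}]^{-1}\beta$, completing the proof.
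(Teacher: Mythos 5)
Your group-theoretic reduction is sound and runs parallel to the paper's own proof, in places more cleanly: from $3\mathcal{O}_{\mathrm{k}}=\mathcal{P}^{2}\mathcal{Q}^{2}\mathcal{R}^{2}$ and $3\mathcal{O}_{\mathrm{k}_{0}}=(\lambda)^{2}$ you get $\lambda\mathcal{O}_{\mathrm{k}}=\mathcal{P}\mathcal{Q}\mathcal{R}$, hence the relation $[\mathcal{P}^{h}][\mathcal{Q}^{h}][\mathcal{R}^{h}]=1$, the identity $[\mathcal{R}^{h}][\mathcal{P}^{h}]^{2}=[\mathcal{P}^{h}]^{1-\sigma}\in C_{\mathrm{k},3}^{-}$, and the reduction of both assertions to ``$[\mathcal{R}^{h}]$ and $[\mathcal{P}^{h}]$ are not ambiguous.'' In fact your two conditions coincide: since $\sigma$ permutes $\mathcal{P},\mathcal{Q},\mathcal{R}$ cyclically, ambiguity of any one of the three classes forces $[\mathcal{P}^{h}]=[\mathcal{Q}^{h}]=[\mathcal{R}^{h}]$, so everything rests on the single statement $[\mathcal{R}^{h}]\notin C_{\mathrm{k},3}^{(\sigma)}$. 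This is the same skeleton as the paper's proof, which instead shows directly that $\mathcal{R}^{h}$, $\mathcal{R}^{3h}$ and $\mathcal{R}^{h}\mathcal{P}^{2h}$ are non-principal.

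The genuine gap is that the one step carrying all the arithmetic content --- the step you yourself call ``the main obstacle and the technical heart'' --- is only named, never proved, and your sketch omits the two ingredients that make it work. First, ambiguity by itself yields no norm equation, so the plan to ``translate non-ambiguity into the non-vanishing of a cubic norm residue symbol'' has no bridge as written. The paper builds that bridge by invoking Gerth's computation that the invariant $s$ equals $3$ when $C_{\mathrm{k},3}$ is of type $(9,3)$, whence $C_{\mathrm{k},3}^{(\sigma)}=C_{\mathrm{k},3}^{(1-\sigma)^{2}}$; then $[\mathcal{R}^{h}]\in C_{\mathrm{k},3}^{(\sigma)}$ gives $\mathcal{R}^{h}=(\alpha)\,\mathcal{L}^{(1-\sigma)^{2}}$, applying $\mathcal{N}_{\mathrm{k}/\mathrm{k}_{0}}$ kills the $(1-\sigma)$-part and yields $\lambda^{h}=\varepsilon\,\mathcal{N}_{\mathrm{k}/\mathrm{k}_{0}}(\alpha)$ with $\varepsilon\in E_{\mathrm{k}_{0}}$, and one still needs $E_{\mathrm{k}_{0}}\subseteq\mathcal{N}_{\mathrm{k}/\mathrm{k}_{0}}(\mathrm{k}^{*})$ --- which is precisely $q^{*}=1$, valid because $p\equiv 1\pmod 9$ (Lemma \ref{11}) --- to conclude that $\lambda^{h}$ is a norm from $\mathrm{k}^{*}$. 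Second, the symbol evaluation itself must be carried out: properties (1), (2), (5), (6) of Section \ref{sec2} give $\left(\frac{\lambda^{h},\pi_{1}\pi_{2}}{\pi_{i}}\right)_{3}=\left(\frac{\lambda}{\pi_{i}}\right)_{3}^{h}$, and one must check that the hypothesis that $3$ is not a cubic residue modulo $p$ forces $\left(\frac{\lambda}{\pi_{i}}\right)_{3}\neq 1$; this is where $\lambda^{2}=-3\zeta_{3}$ and the fact that $-1$ and $\zeta_{3}$ are cubes modulo $\pi_{i}$ (again because $p\equiv 1\pmod 9$) enter, together with $3\nmid h$ so that the exponent $h$ does not destroy the obstruction. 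Until these two points are written out, your text establishes only the easy reduction; the theorem's hypothesis $\left(\frac{3}{p}\right)_{3}\neq 1$ is never actually used, so the proposal as it stands does not prove the statement.
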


In Appendix of this paper, we illustrated this results by the numerical examples with the aid of \textit{Pari} programming \cite{PARI} and summarized in some tables in  section \ref{Exp}.
\begin{proof}
$\\ $ We start our proof by showing that $[\mathcal{R}^{h}]$ is of order $9$:\\
   Since the field $\Gamma=\mathbb{Q}(\sqrt[3]{ p} )$ with $p \equiv 1  \pmod  9$ is of second kind, then by Proposition \ref{3GAMMA} we have $3\mathcal{O}_{\Gamma}=
\mathcal{H}^{2}\mathcal{S}$, where $\mathcal{H}$ and $\mathcal{S}$ are prime of $\Gamma$, since $\mathcal{H}\mathcal{O}_{k}=\mathcal{P}\mathcal{Q}$ and $\mathcal{S}\mathcal{O}_{k}=\mathcal{R}^2$, then  $3\mathcal{O}_{\mathrm{k}}=
\mathcal{P}^{2}\mathcal{Q}^{2}\mathcal{R}^{2}$, where $\mathcal{P},
\,\,\mathcal{Q}$ and $\mathcal{R}$ are prime ideals of
$\mathrm{k}$.
Moreover, the prime ideal $\mathcal{R}$ is invariant by $\tau$, then
$[\mathcal{R}]\in \{ \chi\in C_{\mathrm{k},3} \vert
\chi^{\tau}=\chi\}$.\\
 If $9$ divides the $3$-class number of $\mathbb{Q}(\sqrt[3]{ p})$ exactly and $u=1$, then by Theorem \ref{Cara39} we have $C_{\mathrm{k},3}$ is of type $(9,3)$.
According to  Proposition \ref{39}, we have \(C_{\mathrm{k},3}^{+}\)
is cyclic of order \(9\), thus $[\mathcal{R}^{h}]^{9}=1$.
 Hence the class $[\mathcal{R}^{h}]$ is of order $9$ if and only if
 $\mathcal{R}^{h} \, \text{and}\, \mathcal{R}^{3h}$ are not principal.\\
We argue by the absurd: assume that $\mathcal{R}^{h}$ is principal,
we have
\begin{eqnarray*}
[\mathcal{R}^{h}]=1
& \Rightarrow & \exists \alpha \in
\mathrm{k} ~\mid ~ \mathcal{R}^{h}=\alpha\mathcal{O}_{\mathrm{k}}, \\
&\Rightarrow &  \mathcal{N}_{\mathrm{k}\vert \mathrm{k}_{0}}(\mathcal{R}^{h})=\mathcal{N}_{\mathrm{k}\vert \mathrm{k}_{0}}(\alpha \mathcal{O}_{\mathrm{k}} ),  \\
&\Rightarrow & \lambda^{h}\mathcal{O}_{\mathrm{k}_0}  =\mathcal{N}_{\mathrm{k}\vert \mathrm{k}_{0}}(\alpha) \mathcal{O}_{\mathrm{k}_0}, \ where \ \lambda=1-\zeta_3, \\
&\Rightarrow & \exists \epsilon \in E_{\mathrm{k}_{0}} ~|~
\lambda^{h}=\epsilon\cdot \mathcal{N}_{\mathrm{k}\vert
\mathrm{k}_{0}}(\alpha ), \\
&\Rightarrow & \exists \beta \in \mathcal{O}_{\mathrm{k}} ~|~
\lambda^{h}=\mathcal{N}_{\mathrm{k}\vert
\mathrm{k}_{0}}(\beta ),~ because ~ E_{\mathrm{k}_{0}}\subseteq
\mathcal{N}_{\mathrm{k}\vert \mathrm{k}_{0}}(\mathrm{k}^{*} ),
\end{eqnarray*}

that is to say $\lambda^{h}$ is a norm in
$\mathrm{k}=\mathrm{k}_{0}(\sqrt[3]{
p})=\mathrm{k}_{0}(\sqrt[3]{
\pi_{1}\pi_{2}})$, where $\pi_1$ and $\pi_2$ are two primes of
$\mathrm{k}_0$ such that $p=\pi_1\pi_2$. Hence, by property $(5)$
we have:
\[ (*)\,\,\,\,\,\,\,\,\,\left(\frac{\lambda^{h}, \pi_{1}\pi_{2}}{\mathcal{P}}\right)_{3}=1,\]
for all ideal $\mathcal{P}$ of
$\mathrm{k}_{0}$.\\
In particular, we calculate this symbol for $\mathcal{P}=\pi_1 \mathcal{O}_{\mathrm{k}_0}$ or $\mathcal{P}=\pi_2 \mathcal{O}_{\mathrm{k}_0}$.\\
For $\mathcal{P}=\pi_1 \mathcal{O}_{\mathrm{k}_0}$, using the  property $(1)$ of
the norm residue symbol, we have:
\[\left(\frac{\lambda^{h}, \pi_{1}\pi_{2}}{\mathcal{P}}\right)_{3}=
\left(\frac{\lambda^{h},
\pi_{1}\pi_{2}}{\pi_1}\right)_{3}=\left(\frac{\lambda^{h},
\pi_{1}}{\pi_1}\right)_{3}\cdot \left(\frac{\lambda^{h},
\pi_{2}}{\pi_1}\right)_{3}\]
the  properties $(2)$ and $(5)$ imply that:
\[\left(\frac{\lambda^{h},\pi_{2}}{\pi_1}\right)_{3}=\left(\frac{\lambda,\pi_{2}}{\pi_1}\right)_{1}^{h}=
 \left(\frac{\lambda}{\pi_1}\right)_{3}^{0\times h}=1.\]
and from the  properties $(1)$ and $(6)$ we have
  \[\left(\frac{\lambda^{h},
\pi_{1}}{\pi_1}\right)_{3}=\left(\frac{\lambda,
\pi_{1}}{\pi_1}\right)_{3}^{h}=\left(\frac{\lambda}{\pi_1}\right)_{3}^{h}\]
consequently
\[\left(\frac{\lambda^{h},
\pi_{1}\pi_2}{\pi_1}\right)_{3}=\left(\frac{\lambda}{\pi_1}\right)_{3}^{h}\].

Since the two primes $\pi_1$ and $\pi_2$ play  symmetric roles, then we obtain a similar relation when
$\mathcal{P}=\pi_2$:

$$\left(\frac{\lambda^{h},
\pi_{1}\pi_{2}}{\pi_2}\right)_{3}=\left(\frac{\lambda}{\pi_2}\right)_{3}^{h}.$$ \\
 The equation $(*)$ imply that
$$\left(\frac{\lambda}{\pi_1}\right)_{3}^{h}=\left(\frac{\lambda}{\pi_2}\right)_{3}^{h}=1.$$

The fact that $3$ is not a cubic residue  modulo $p$ imply that 
$$ \left(\frac{\lambda}{\pi_1\pi_2}\right)_{3} \neq 1  $$
then 
$$\left(\frac{\lambda}{\pi_1}\right)_{3} \neq 1  \ \text{or} \ \left(\frac{\lambda}{\pi_2}\right)_{3} \neq 1.$$
Since $3$ does not divide $h$, then 
$$\left(\frac{\lambda}{\pi_1}\right)_{3}^{h} \neq 1  \ \text{or} \ \left(\frac{\lambda}{\pi_2}\right)_{3}^{h} \neq 1.$$
which is a contradiction. Consequently, the
ideal $\mathcal{R}^{h}$ is not principal.\\

 Since the class $[\mathcal{R}^{h}]$ is invariant by $\tau$, we deduce that the ideal
 $\mathcal{R}^{3h}$ is  principal if and only if
 $\langle[\mathcal{R}^{h}]\rangle=C_{\mathrm{k},3}^{(\sigma)}$.\\
 Since $9$ divides exactly the $3$-class number of $\mathbb{Q}(\sqrt[3]{ p})$ and $u=1$, then by  we get $|C_{\mathrm{k},3}|=27$, so the positive integer $s$ defined above is equal \(3\), then $C_{k,3}^{(1-\sigma)^3}=1$, this implies that $C_{k,3}^{(\sigma)} = C_{k,3}^{(1-\sigma)^2}$. Suppose that $[\mathcal{R}^{h}]\in
C_{\mathrm{k},3}^{(\sigma)}$, then $[\mathcal{R}^h] = [ \mathcal{L}^{(1-\sigma)^2}]$ with $\mathcal{L}$ is prime ideal of $\mathrm{k}$,
 then there exist
$\alpha \in \mathrm{k}^{*}$ such that
$\mathcal{R}^{h}=(\alpha)\cdot\mathcal{L}^{(1-\sigma)^{2}}$, so $\mathcal{N}_{\mathrm{k}\vert \mathrm{k}_{0}}( \mathcal{R}^h )=\mathcal{N}_{\mathrm{k}\vert \mathrm{k}_{0}}( \alpha.\mathcal{L}^{(1-\sigma)^2})$, since $\mathcal{N}_{\mathrm{k}\vert \mathrm{k}_{0}}(\mathcal{L}^{(1-\sigma)^2})= \mathcal{L}^{(1-\sigma)(1-\sigma^3)} =1$, then $\lambda^{h}\mathcal{O}_{\mathrm{k}}=\mathcal{N}_{\mathrm{k}\vert
\mathrm{k}_{0}}(\alpha)\mathcal{O}_{\mathrm{k}}$, where $\lambda=1-\zeta_3$, so there exist $\varepsilon \in E_{\mathrm{k}}$ such that
$\lambda^{h}=\varepsilon\cdot \mathcal{N}_{\mathrm{k}\vert
\mathrm{k}_{0}}(\alpha )$, as $\lambda^{h} $ and $\mathcal{N}_{\mathrm{k}\vert
\mathrm{k}_{0}}(\alpha ) $ are in $\mathrm{k}_{0}$ then $\varepsilon \in E_{\mathrm{k}_{0}} $, since
$E_{\mathrm{k}_{0}}\subseteq \mathcal{N}_{\mathrm{k}\vert
\mathrm{k}_{0}}(\mathrm{k}^{*} )$ then $\lambda^{h}=
\mathcal{N}_{\mathrm{k}\vert \mathrm{k}_{0}}(\alpha_{1} ) $ where $\alpha_1
\in \mathcal{O}_{\mathrm{k}}$, that means $\lambda^{h}$ is a norm
in $\mathrm{k}=\mathrm{k}_{0}(\sqrt[3]{ p})$ which
is impossible. Finally, $[\mathcal{R}^{h}]$ is of order
$9$. This completes the proof of the first statement. \\

The second step in the proof is showing that the class $[\mathcal{R}^{h}][\mathcal{P}^{h}]^2$ is of order
$3$. We know that $(\mathcal{R}^{h})^{\tau}=\mathcal{R}^{h}$ and
$(\mathcal{P}^{h})^{\tau}=\mathcal{Q}^{h}$, then:
\begin{eqnarray*}
\left(\mathcal{R}^{h}\cdot
(\mathcal{P}^{h})^{2}\right)^{1+\tau} &=&
\left(\mathcal{R}^{h}\right)^{1+\tau} \cdot \left((\mathcal{P}^{h})^{2}\right)^{1+\tau}\\
&=&
(\mathcal{P}^{h})^{2}\cdot(\mathcal{R}^{h})^{2}\cdot
(\mathcal{Q}^{h})^{2}\\
&=& 3^{h}\mathcal{O}_{\mathrm{k}},
\end{eqnarray*}
 which imply that $[\mathcal{R}^{h}\cdot (\mathcal{P}^{h})^{2}]^{1+\tau}=1$. Hence
$[\mathcal{R}^{h}\cdot (\mathcal{P}^{h})^{2}]\in
C_{\mathrm{k},3}^{-}$.\\
On the other hand $\mathcal{R}^{h}\cdot (\mathcal{P}^{h})^{2}$ is
not principal, because otherwise we have
$[\mathcal{R}^{h}]=[\mathcal{P}^{h}]^{7}$, the fact that
$[(\mathcal{R}^{h})^{2}\cdot(\mathcal{P}^{h})^{2}\cdot(\mathcal{Q}^{h})^{2}]=1$
imply that $[(\mathcal{Q}^{h})^{2}]=1$, which is a contradiction
because the class $[\mathcal{Q}^{h}]$ is of order $9$ (reasoning as
$\mathcal{R}^{h}$). Hence
$[\mathcal{R}^{h}][\mathcal{P}^{h}]^{2}$ is of order $3$ and  generate the group  $C_{\mathrm{k},3}^{-}$.\\
Since $[\mathcal{R}^{h}]$ is a generator of
$C_{\mathrm{k},3}^{+}$, we deduce that
$$C_{\mathrm{k},3}=\langle[\mathcal{R}^{h}],[\mathcal{R}^{h}][\mathcal{P}^{h}]^{2}\rangle.$$
\end{proof}

\begin{corollary}
Using the same notation as above, we have the following properties:
\begin{enumerate}
\item $\mathcal{P}^{\sigma}=\mathcal{Q},\,\,\mathcal{Q}^{\sigma}=\mathcal{R}$;
\item $\mathcal{R}^{\tau}=\mathcal{R}\,\, \text{and}\,\, \langle[\mathcal{R}]\rangle=
\{ \chi \in C_{\mathrm{k},3} \vert \chi^{\tau }=\chi \}$;
\item $\mathcal{P}^{\tau\sigma}=\mathcal{P}\,\, \text{and}\,\, \langle[\mathcal{P}]\rangle=
\{ \chi \in C_{\mathrm{k},3} \vert \chi^{\tau \sigma}=\chi \}$;
\item $\mathcal{Q}^{\tau\sigma^{2}}=\mathcal{Q}\,\, \text{and}\,\, \langle[\mathcal{Q}]\rangle=
\{ \chi \in C_{\mathrm{k},3} \vert \chi^{\tau \sigma^{2}}=\chi \}$;
\item The \(3\)-class group can be generated also by:\\
$$C_{\mathrm{k},3}=\langle[\mathcal{P}^{h}],[\mathcal{P}^{h}][\mathcal{Q}^{h}]^2\rangle=
\langle[\mathcal{Q}^{h}],[\mathcal{Q}^{h}][\mathcal{R}^{h}]^2\rangle.$$
\item  The $3$-group $C_{\mathrm{k},3}^{(\sigma)}$ of ambiguous ideal classes is given by:\\
$$C_{\mathrm{k},3}^{(\sigma)}=\langle[\mathcal{R}^{3h}]\rangle=\langle[\mathcal{P}^{3h}]\rangle=\langle[\mathcal{Q}^{3h}]\rangle.$$
\item The principal genus \(C_{\mathrm{k},3}^{1-\sigma}=\{ \mathcal{A}^{1-\sigma}\, \vert\,
\mathcal{A} \in C_{\mathrm{k},3} \}\) is of type \((3,3)\) and
generated by:
$$C_{\mathrm{k},3}^{1-\sigma}=\langle[\mathcal{R}^{3h}],[\mathcal{R}^{h}][\mathcal{P}^{h}]^2\rangle.$$

\end{enumerate}
\end{corollary}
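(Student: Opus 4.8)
The whole corollary rests on pinning down the action of $\mathrm{Gal}(\mathrm{k}/\mathbb{Q})=\langle\sigma,\tau\rangle\cong S_3$ (with $\tau\sigma\tau^{-1}=\sigma^{-1}$) on the three primes $\mathcal{P},\mathcal{Q},\mathcal{R}$ above $3$, and then transporting Theorem \ref{thm:main} and Proposition \ref{prop:carrr39} by that action. The plan is to fix the convention $\chi^{gh}=(\chi^{h})^{g}$ (so $\chi^{g}=g(\chi)$, the rightmost symbol acting first) and keep it rigidly, since $\sigma$ and $\tau$ do not commute. For item (1) I would first note that $\mathcal{P},\mathcal{Q},\mathcal{R}$ are exactly the primes of $\mathrm{k}$ above the unique prime $\lambda=(1-\zeta_3)$ of $\mathrm{k}_0$ dividing $3$: from $3\mathcal{O}_{\mathrm{k}_0}=\lambda^2$ and $3\mathcal{O}_{\mathrm{k}}=\mathcal{P}^2\mathcal{Q}^2\mathcal{R}^2$ one gets $\lambda\mathcal{O}_{\mathrm{k}}=\mathcal{P}\mathcal{Q}\mathcal{R}$. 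Since $\mathrm{Gal}(\mathrm{k}/\mathrm{k}_0)=\langle\sigma\rangle$ acts transitively on these three primes and has order $3$, the action is simply transitive, so $\sigma$ is a $3$-cycle; relabelling if necessary gives $\mathcal{P}^{\sigma}=\mathcal{Q}$, $\mathcal{Q}^{\sigma}=\mathcal{R}$, hence $\mathcal{R}^{\sigma}=\mathcal{P}$.

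For the ideal equalities in (2)--(4) I would use that $\mathcal{R}$ is the unique prime of $\mathrm{k}$ above the prime $\mathcal{S}$ of $\Gamma$ with $\mathcal{S}\mathcal{O}_{\mathrm{k}}=\mathcal{R}^2$ (recorded in the proof of Theorem \ref{thm:main}), so $\mathcal{R}$ is fixed by $\mathrm{Gal}(\mathrm{k}/\Gamma)=\langle\tau\rangle$, giving $\mathcal{R}^{\tau}=\mathcal{R}$. Combining this with $\tau\sigma=\sigma^{-1}\tau$ gives $\mathcal{P}^{\tau}=\tau(\sigma(\mathcal{R}))=\sigma^{-1}(\mathcal{R})=\mathcal{Q}$ and likewise $\mathcal{Q}^{\tau}=\mathcal{P}$; feeding these back yields $\mathcal{P}^{\tau\sigma}=\tau(\sigma(\mathcal{P}))=\tau(\mathcal{Q})=\mathcal{P}$ and $\mathcal{Q}^{\tau\sigma^{2}}=\mathcal{Q}$. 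This paragraph is where I expect the only real difficulty: the non-commuting bookkeeping must be done consistently with the labeling chosen for $\sigma$, and a single slip in the order of $\sigma,\tau$ flips $\sigma$ to $\sigma^{-1}$ and breaks items (3)--(4).

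For the fixed-subgroup halves of (2)--(4) I would invoke the Gerth decomposition used in Proposition \ref{39}, namely $C_{\mathrm{k},3}=C_{\mathrm{k},3}^{+}\times C_{\mathrm{k},3}^{-}$ with $\tau$ acting trivially on $C^{+}$ and by inversion on $C^{-}$. Since $C^{-}$ has exponent $3$, an element is $\tau$-fixed iff its $C^{-}$-component is trivial, so $\{\chi:\chi^{\tau}=\chi\}=C_{\mathrm{k},3}^{+}=\langle[\mathcal{R}^{h}]\rangle$ by Theorem \ref{thm:main} (here $\langle[\mathcal{R}^{h}]\rangle$ is the subgroup generated by the $3$-part of $[\mathcal{R}]$, since $\gcd(h,3)=1$ makes raising to the $h$ power an automorphism of $C_{\mathrm{k},3}$ that kills the prime-to-$3$ part). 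Rather than redo the eigenspace analysis for $\tau\sigma$ and $\tau\sigma^{2}$, I would transport (2) by the automorphism $\sigma$: a short computation with $\tau\sigma=\sigma^{-1}\tau$ shows $\chi^{\tau}=\chi\Rightarrow(\chi^{\sigma})^{\tau\sigma}=\chi^{\sigma}$ and $(\chi^{\sigma^{2}})^{\tau\sigma^{2}}=\chi^{\sigma^{2}}$, so $\sigma$ maps the $\tau$-fixed subgroup onto the $\tau\sigma$-fixed one and $\sigma^{2}$ onto the $\tau\sigma^{2}$-fixed one. Using $[\mathcal{R}^{h}]^{\sigma}=[\mathcal{P}^{h}]$ and $[\mathcal{R}^{h}]^{\sigma^{2}}=[\mathcal{Q}^{h}]$ this gives $\{\chi:\chi^{\tau\sigma}=\chi\}=\langle[\mathcal{P}^{h}]\rangle$ and $\{\chi:\chi^{\tau\sigma^{2}}=\chi\}=\langle[\mathcal{Q}^{h}]\rangle$; equality rather than mere inclusion follows because $\sigma$ is an automorphism, so all three fixed subgroups have the same order $9$.

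Items (5)--(7) are then pure transport. Applying $\sigma$ to the generating set $C_{\mathrm{k},3}=\langle[\mathcal{R}^{h}],[\mathcal{R}^{h}][\mathcal{P}^{h}]^{2}\rangle$ of Theorem \ref{thm:main}, together with $[\mathcal{R}^{h}]^{\sigma}=[\mathcal{P}^{h}]$, $[\mathcal{P}^{h}]^{\sigma}=[\mathcal{Q}^{h}]$, $[\mathcal{Q}^{h}]^{\sigma}=[\mathcal{R}^{h}]$, yields item (5) after one and two applications of $\sigma$. For (6), Proposition \ref{prop:carrr39} gives $C_{\mathrm{k},3}^{(\sigma)}=\langle[\mathcal{R}^{h}]^{3}\rangle=\langle[\mathcal{R}^{3h}]\rangle$; since $C_{\mathrm{k},3}^{(\sigma)}$ is $\sigma$-invariant, $[\mathcal{R}^{3h}]$ is ambiguous, whence $[\mathcal{R}^{3h}]=[\mathcal{R}^{3h}]^{\sigma}=[\mathcal{P}^{3h}]$ and $=[\mathcal{Q}^{3h}]$, so the three cyclic groups coincide. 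Finally (7) combines (6) with the fact, established inside the proof of Theorem \ref{thm:main}, that $[\mathcal{R}^{h}][\mathcal{P}^{h}]^{2}$ has order $3$ and generates $C_{\mathrm{k},3}^{-}$: by Proposition \ref{prop:carrr39}(2) the principal genus is $C_{\mathrm{k},3}^{1-\sigma}=C_{\mathrm{k},3}^{(\sigma)}\times C_{\mathrm{k},3}^{-}$ of type $(3,3)$, hence equal to $\langle[\mathcal{R}^{3h}],[\mathcal{R}^{h}][\mathcal{P}^{h}]^{2}\rangle$. Everything after the first paragraph is $\sigma$-symmetry applied to results already in hand, so the genuine content and the only error-prone step is the Galois action on the three ramified primes.
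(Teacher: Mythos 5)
Your proof is correct and follows essentially the same route as the paper: the action of $\operatorname{Gal}(\mathrm{k}/\mathbb{Q})=\langle\sigma,\tau\rangle$ on the three primes above $3$, transport of Theorem \ref{thm:main} by the automorphism induced by $\sigma$, and Proposition \ref{prop:carrr39} for items (6) and (7). The paper's own proof is only a brief sketch, so your $S_3$ bookkeeping, the $\pm$-eigenspace argument identifying the $\tau$-, $\tau\sigma$- and $\tau\sigma^{2}$-fixed subgroups, and the explicit $\sigma$-transport of the generating sets are precisely the details it leaves implicit.
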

\begin{proof}
$\\ $
 The fact that the ideals \(\mathcal{P}^{h},\, \mathcal{Q}^{h}\) and
 \(\mathcal{R}^{h}\) are not principals, we prove the assertions
\((1), (2), (3) \) and \((4)\) by applying the decomposition 
of \(3\) in the normal closure \(\mathrm{k}\).\\
For the assertion \((5)\), since the ideals \(\mathcal{P}^{h},\,
\mathcal{Q}^{h}\) and \(\mathcal{R}^{h}\) are not principal, we obtain the result by the same reasoning above.\\
 The assertions \((6)\) and \((7)\) follows by using Proposition
 \ref{prop:carrr39}.
\end{proof}


\section{\textbf{ Appendix}}\label{Exp}\label{sect4}
Using the \textit{Pari} programming \cite{PARI}, we illustrate the results of
our main Theorem \ref{thm:main} by  numerical examples. We have
\[C_{\mathrm{k},3}=\langle[\mathcal{R}^{h}],[\mathcal{R}^{h}][\mathcal{P}^{h}]^2\rangle\]
The following table verifies, for each prime number $p \equiv 1 \pmod  9$ such that \(\left(\dfrac{3}{p}\right)_{3}\neq 1\) and $9$ divides the $3$-class number of $\mathbb{Q}(\sqrt[3]{ p})$ exactly and $u=1$, that the ideals $\mathcal{R}^{h}$ and $\mathcal{R}^{3h}$ are not principal. Therefore,
  the ideal $\mathcal{R}^{9h}$ is always principal.
$$ \text{Table 1}$$

\begin{longtable}{ | c | c | c | c | c |  }
 \hline
   $p$ & Type of $C_{\mathrm{k},3}$ &   Is principal $\mathcal{R}^{h}$  & Is principal $\mathcal{R}^{3h}$ & Is principal $\mathcal{R}^{9h}$     \\
\hline\hline

  $199$ & $[9, 3]$ & $[8, 0]~$ & $[6, 0]~$ & $[0, 0]~$  \\
  $487$ & $[9, 3]$ & $[10, 0]~$ & $[12, 0]~$ & $[0, 0]~$  \\
  $1297$ & $[9, 3]$ & $[16, 0]~$ & $[12, 0]~$ & $[0, 0]~$  \\
  $1693$ & $[9, 3]$ & $[2, 2]~$ & $[6, 0]~$ & $[0, 0]~$  \\
  $1747$ & $[9, 3]$ & $[8, 0]~$ & $[6, 0]~$ & $[0, 0]~$  \\
  $1999$ & $[9, 3]$ & $[8, 0]~$ & $[6, 0]~$ & $[0, 0]~$  \\
  $2017$ & $[9, 3]$ & $[8, 0]~$ & $[6, 0]~$ & $[0, 0]~$  \\
  $2143$ & $[9, 3]$ & $[14, 0]~$ & $[6, 0]~$ & $[0, 0]~$  \\
  $2377$ & $[9, 3]$ & $[7, 0]~$ & $[3, 0]~$ & $[0, 0]~$  \\
  $2467$ & $[9, 3]$ & $[20, 0]~$ & $[15, 0]~$ & $[0, 0]~$  \\
  $2593$ & $[9, 3]$ & $[4, 2]~$ & $[3, 0]~$ & $[0, 0]~$  \\
  $2917$ & $[9, 3]$ & $[8, 0]~$ & $[6, 0]~$ & $[0, 0]~$  \\
  $3511$ & $[9, 3]$ & $[10, 0]~$ & $[12, 0]~$ & $[0, 0]~$  \\
  $3673$ & $[9, 3]$ & $[8, 0]~$ & $[6, 0]~$ & $[0, 0]~$  \\
  $3727$ & $[9, 3]$ & $[5, 0]~$ & $[6, 0]~$ & $[0, 0]~$  \\
  $4159$ & $[9, 3]$ & $[4, 2]~$ & $[12, 0]~$ & $[0, 0]~$  \\
  $4519$ & $[9, 3]$ & $[4, 4]~$ & $[12, 0]~$ & $[0, 0]~$  \\
  $4591$ & $[9, 3]$ & $[1, 2]~$ & $[3, 0]~$ & $[0, 0]~$  \\
  $4789$ & $[9, 3]$ & $[25, 5]~$ & $[30, 0]~$ & $[0, 0]~$  \\
  $5347$ & $[9, 3]$ & $[8, 0]~$ & $[6, 0]~$ & $[0, 0]~$  \\
  $5437$ & $[9, 3]$ & $[77, 0]~$ & $[33, 0]~$ & $[0, 0]~$  \\
  $6949$ & $[9, 3]$ & $[7, 2]~$ & $[3, 0]~$ & $[0, 0]~$  \\
  $8209$ & $[9, 3]$ & $[2, 2]~$ & $[6, 0]~$ & $[0, 0]~$  \\
   $8821$ & $[9, 3]$ & $[4, 0]~$ & $[3, 0]~$ & $[0, 0]~$  \\

\hline

\end{longtable}
\label{tab:genor9}


However, we verify in the following table
 that the ideal $\mathcal{R}^{h}\mathcal{P}^{2h}$ is not principal and $\mathcal{R}^{h}\mathcal{P}^{2h}$ is of order $3$.


$$ \text{Table 2}$$

\begin{longtable}{| c | c | c | c |  }
 \hline
   $p$ & Type of $C_{\mathrm{k},3}$ &   Is principal $\mathcal{R}^{h}\mathcal{P}^{2h}$ & Is principal $(\mathcal{R}^{h}\mathcal{P}^{2h})^3$      \\
\hline \hline
 $199$ & $[9, 3]$    & $[0, 1]~$  & $[0, 0]~$ \\
 $487$ & $[9, 3]$    & $[0, 2]~$  & $[0, 0]~$ \\
 $1297$ & $[9, 3]$    & $[6, 4]~$  & $[0, 0]~$ \\

 $1693$ & $[9, 3]$    & $[6, 2]~$  & $[0, 0]~$ \\
 $1747$ & $[9, 3]$    & $[0, 1]~$  & $[0, 0]~$ \\
 $1999$ & $[9, 3]$    & $[0, 2]~$  & $[0, 0]~$ \\
 $2017$ & $[9, 3]$    & $[0, 2]~$  & $[0, 0]~$ \\
 $2143$ & $[9, 3]$    & $[0, 4]~$  & $[0, 0]~$ \\
 $2377$ & $[9, 3]$    & $[3, 2]~$  & $[0, 0]~$ \\
 $2467$ & $[9, 3]$    & $[0, 10]~$  & $[0, 0]~$ \\
 $2593$ & $[9, 3]$    & $[0, 2]~$  & $[0, 0]~$ \\
 $2917$ & $[9, 3]$    & $[0, 1]~$  & $[0, 0]~$ \\
 $3511$ & $[9, 3]$    & $[0, 2]~$  & $[0, 0]~$ \\
 $3673$ & $[9, 3]$    & $[0, 1]~$  & $[0, 0]~$ \\
 $3727$ & $[9, 3]$    & $[3, 1]~$  & $[0, 0]~$ \\
 $4159$ & $[9, 3]$    & $[6, 2]~$  & $[0, 0]~$ \\
 $4519$ & $[9, 3]$    & $[24, 4]~$  & $[0, 0]~$ \\
\hline

\end{longtable}
\label{tab:genor3}

\begin{quote}
Siham AOUISSI and Moulay Chrif ISMAILI \\ Department of Mathematics and Computer Sciences, \\ Mohammed 1st University,\\ Oujda - Morocco, \\
aouissi.siham@gmail.com, mcismaili@yahoo.fr.\\

Mohamed TALBI \\ Regional Center of Professions of Education and Training in the Oriental, \\ Oujda - Morocco, \\
ksirat1971@gmail.com.\\

Abdelmalek AZIZI \\Department of Mathematics and Computer Sciences,\\
 Mohammed 1st University, \\ Oujda - Morocco, \\
abdelmalekazizi@yahoo.fr.
\end{quote}

\end{document}